\DeclareFontFamily{OML}{script}{}
\DeclareFontShape{OML}{script}{m}{it}
{ <5-20> rsfs10 }{}
\DeclareMathAlphabet{\mathscript}{OML}{script}{m}{it}
\renewcommand{\mathcal}[1]{{\mathscript #1}}
\newcommand{\red}[1]{\color{red}\huge#1}
\newcommand{\dif}{\,\mathrm{d}}
\newcommand{\p}{\partial}
\newcommand{\re}[1]{\mbox{\rm$($\ref{#1}$)$}}
\newcommand{\Rmnum}[1]{\uppercase\expandafter{\romannumeral #1}}
\newcommand\vep{{\varepsilon}}
\newcommand{\red}{\color{red}}
\newtheorem{thm}{Theorem}[section]
\newtheorem{lem}{Lemma}[section]
\numberwithin{equation}{section}
\title{Two neural-network-based methods for solving obstacle problems}
\author{Xinyue Evelyn Zhao}  
\address{Department of Mathematics\\
Vanderbilt University, Nashville, TN 37212, USA \\ xinyue.zhao@vanderbilt.edu}
\author{Wenrui Hao}
\address{Department of Mathematics\\
 The Pennsylvania State University, University Park, PA 16802, USA \\ wxh64@psu.edu}
 \author{Bei Hu}
 \address{Department of Applied and Computational Mathematics and Statistics\\
University of Notre Dame, Notre Dame, IN 46556, USA \\ b1hu@nd.edu}
\date{}
\begin{document}
\maketitle

\begin{abstract}
Two neural-network-based numerical schemes are proposed to solve the classical obstacle problems. The schemes are based on the universal approximation property of neural networks, and the cost functions are taken as the energy minimization of the obstacle problems. We rigorously prove the convergence of the two schemes and derive the convergence rates with the number of neurons $N$. In the simulations, we use two example problems (1-D \& 2-D) to verify the convergence rate of the methods and the quality of the results.
\end{abstract}

\section{Introduction}
The classical obstacle problem is one of the simplest and yet most important free boundary problems. It models the equilibrium shape of an elastic membrane whose boundary is held fixed, and which is pushed by an obstacle from one side. The classical obstacle problem can be naturally and directly interpreted as an energy minimization problem (i.e., the classical elastic energy of the membrane) with the addition of a constraint (i.e., the solution lies above the obstacle). 
Applications of the problem can be found in many aspects, such as fluid filtration in porous media, elasto-plasticity, optimal control, and financial mathematics.

In modern computational mathematics and engineering, it is no longer extremely difficult to numerically solve the obstacle problems, as shown in numerous publications; see \cite{arakelyan2014numerical,hoppe1994adaptive,graser2009multigrid,chan2013generalized,tran20151,lee2017accurate}, for example. However, most of these research studies lack convergence proofs for the methods. From a mathematical point of view, the most challenging part of the obstacle problem is the regularity of the solution. It is well known that the {\em optimal
regularity of } the solution to the classical obstacle problem \re{model} 
is $W^{2,\infty}$; in general, this solution cannot be $C^2$. Without the continuity of second-order and higher-order derivatives, it is not an easy job to prove the convergence. Moreover, as a common challenge in all free boundary problems, the numerical solution can easily involve a large error near the free boundary, hence careful considerations are needed to deal with the free boundary.

In this work, we are interested in the design and analysis of neural-network-based methods to solve the classical obstacle problem. Over the last few decades, neural-network-based methods have been used a lot in solving various partial differential equations \cite{dissanayake1994neural,lagaris1998artificial,zhao2021convergence,sirignano2018dgm,wang2021deep}. By using the universal approximation of neural networks \cite{hornik1991approximation,cybenko1989approximation}, the numerical method of solving a partial differential equation can be transferred to an unconstrained minimization problem, which is 
easily implemented. In general, the objective function is chosen as the $L^2$ norm of the equation, together with   the $L^2$ norm of all the boundary conditions, 
and then higher order of the regularity of the solution is required to prove the convergence of the method. 
In the classical obstacle problem, however, the solution is not smooth, hence there are several questions to be considered:
\begin{itemize}
    \item How to design a proper objective function such that the regularity of the solution is enough to prove the convergence?
    \item How to incorporate the boundary condition without a penalty term in the objective function?
\end{itemize}
As is mentioned, the classical obstacle problem is naturally related to an energy minimization problem, thus we will present two neural-network-based methods in which the variational form of the obstacle problem is taken as the objective function.

The paper is organized as follows. The model and the construction of neural networks are introduced in Section 2. In Section 3, we propose two neural-network-based methods and establish their convergences. Some simulation results are presented in Section 4.

\section{The model and nerual networks}
\subsection{The classical obstacle problem}
We consider the following obstacle problem
\begin{equation}\label{model}
    \begin{array}{ll}
        -\Delta u \ge f \hspace{2em} &\text{in } \Omega,\\
        u\ge \phi &\text{in } \Omega,\\
        (-\Delta u - f)(u-\phi) = 0 \hspace{2em} &\text{in } \Omega,\\
        u=0 &\text{on } \p \Omega,
    \end{array}
\end{equation}
where $\Omega \subset \mathbb{R}^p$, $p=1,2$, $\phi$ is the obstacle function satisfying $\phi < 0$ on $\p \Omega$ and $f$ typically represents external force for physical problems. The unknown function $u$ models the equilibrium position of an elastic membrane.

In the region where the membrane is below the obstacle, the solution $u$ satisfies an elliptic PDE $-\Delta u = f$; while in the remaining region, which is called the contact region, the membrane coincides with the obstacle. The set
$I = \{x: u(x) = \phi(x) \}$
is called the contact set, and the free boundary to this problem is 
$\Gamma = \p I = \p \{x: u(x) = \phi(x) \}.$ It is clear that in the contact region $-\Delta \phi \ge f$.

It is well known that the solution $u$ for \re{model} is unique and can be actually obtained via a variational formulation, namely,
\begin{equation}\label{variation1}
    \min_{u\in K} \int_\Omega \Big(\frac12 |\nabla u|^2 - uf\Big) \dif x,
\end{equation}
where $K = \{u\in H_0^1(\Omega)|u(x)\ge  \phi(x)\text{ for }x\in\Omega\}$.

For simplicity, we list the following regularity result of \re{model}, and we want to emphasize that $W^{2,\infty}$ is \textit{the optimal regularity} for this problem.

\begin{lem}{(Chapter 1, Theorems 4.1 and 4.3 in \cite{friedman2012variational})}\label{lem2.1}
If $\p \Omega \in C^{2+\alpha}$, $f\in C^\alpha(\overline{\Omega})$, $\phi\in C^2(\overline{\Omega})$ with $\phi<0$ on $\p\Omega$, then the solution $u$ of \re{model} belongs to $W^{2,\infty}
(\Omega)$.
\end{lem}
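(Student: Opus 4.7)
The plan is to sketch the standard penalization argument behind this classical result from \cite{friedman2012variational}, which is self-contained under the stated hypotheses. I would regularize the variational inequality by fixing a smooth nondecreasing penalty $\beta_\epsilon:\mathbb{R}\to\mathbb{R}$ with $\beta_\epsilon\equiv 0$ on $[0,\infty)$ and $\beta_\epsilon(s)\to -\infty$ as $\epsilon\to 0^+$ for each $s<0$, and consider
\[
-\Delta u_\epsilon = f - \beta_\epsilon(u_\epsilon-\phi) \text{ in }\Omega,\qquad u_\epsilon=0\text{ on }\p\Omega.
\]
Monotonicity of the nonlinearity together with Schauder theory (using $\p\Omega\in C^{2+\alpha}$ and $f\in C^\alpha$) yields a unique classical solution $u_\epsilon\in C^{2+\alpha}(\overline\Omega)$; a standard energy argument then shows $u_\epsilon\to u$ in $H_0^1(\Omega)$ as $\epsilon\to 0$, where $u$ is the variational solution of \re{model}.

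The core of the proof would be a uniform $L^\infty$ bound on $\Delta u_\epsilon$. I would apply the weak maximum principle to $v_\epsilon:=\phi-u_\epsilon$: because $\phi<0$ on $\p\Omega$ and $u_\epsilon=0$ on $\p\Omega$, if $\max_{\overline\Omega}v_\epsilon>0$ then this maximum is attained at an interior point $x^\star$, where $-\Delta v_\epsilon(x^\star)\ge 0$, i.e.\ $-\Delta u_\epsilon(x^\star)\le -\Delta\phi(x^\star)$. The PDE then gives
\[
\beta_\epsilon(u_\epsilon-\phi)(x^\star)\ge f(x^\star)+\Delta\phi(x^\star) \ge -\|f\|_{L^\infty}-\|\Delta\phi\|_{L^\infty},
\]
and monotonicity of $\beta_\epsilon$ propagates this lower bound to all of $\Omega$ (since $u_\epsilon-\phi$ attains its minimum at $x^\star$), yielding $\|\Delta u_\epsilon\|_{L^\infty(\Omega)}\le 2\|f\|_{L^\infty}+\|\Delta\phi\|_{L^\infty}$ independently of $\epsilon$.

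With $\Delta u_\epsilon$ uniformly bounded in $L^\infty$, Calder\'on--Zygmund $W^{2,p}$ estimates give a uniform bound on $\|u_\epsilon\|_{W^{2,p}(\Omega)}$ for every finite $p$; passing to a weak-$\ast$ limit as $\epsilon\to 0$ and then letting $p\to\infty$ places $u$ in $W^{2,\infty}(\Omega)$. The hard part will be the two-sided $L^\infty$ control on $\Delta u_\epsilon$: it is the only place where all three hypotheses ($C^{2+\alpha}$ boundary, $C^\alpha$ right-hand side, and $C^2$ obstacle with $\phi<0$ on $\p\Omega$) are genuinely used. Once it is in hand, the remainder reduces to standard elliptic regularity and weak compactness.
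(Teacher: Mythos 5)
The paper does not prove Lemma \ref{lem2.1}; it is cited verbatim from Friedman's book (Chapter 1, Theorems 4.1 and 4.3 of \cite{friedman2012variational}), so there is no in-paper argument to compare against. Judged on its own merits, your sketch correctly identifies the classical penalization strategy, and your uniform $L^\infty$ bound on the penalty term (hence on $\Delta u_\epsilon$) via a maximum-principle argument at the interior maximum of $\phi-u_\epsilon$ is essentially the same computation the paper itself performs later in Step~2 of the proof of Lemma \ref{lem1}.

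However, the final step contains a genuine gap: a uniform bound on $\|\Delta u_\epsilon\|_{L^\infty}$ does \emph{not} yield $u\in W^{2,\infty}(\Omega)$. The Calder\'on--Zygmund inequality $\|D^2 u\|_{L^p}\le C_p\|\Delta u\|_{L^p}$ has a constant $C_p$ that blows up (like $p$) as $p\to\infty$, so your uniform bound on $\|\Delta u_\epsilon\|_{L^\infty}$ only produces $W^{2,p}$ bounds that degenerate in $p$; ``letting $p\to\infty$'' cannot be justified. Indeed, $\Delta u\in L^\infty$ only gives $D^2u\in \mathrm{BMO}$ in general, and $u(x_1,x_2)=x_1x_2\log(x_1^2+x_2^2)$ near the origin is the standard counterexample (bounded Laplacian, unbounded mixed second derivative). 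The missing ingredient in the classical proof (Frehse, Brezis--Kinderlehrer, and the argument in Friedman's book) is a \emph{uniform one-sided bound on the pure second derivatives} $D^2_{ee}u_\epsilon\ge -C$ independent of $\epsilon$ and $e$. This is obtained by differentiating the penalized equation twice in a direction $e$ and crucially exploiting the \emph{convexity} of the penalty ($\beta_\epsilon''\ge 0$, which your sketch never uses): the resulting term $\beta_\epsilon''(\cdot)\big(D_e(\phi-u_\epsilon)\big)^2$ has a favorable sign in the maximum-principle argument for $D^2_{ee}u_\epsilon$. Once $D^2_{ee}u_\epsilon\ge -C$ for every unit vector $e$, combining with the two-sided Laplacian bound you already established yields a full $L^\infty$ bound on the Hessian. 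Finally, the hypothesis $\phi<0$ on $\p\Omega$ is used for more than ensuring the maximum of $\phi-u_\epsilon$ is interior: it keeps the coincidence set compactly inside $\Omega$, so that near $\p\Omega$ the solution solves $-\Delta u=f$ and $C^{2,\alpha}$-regularity up to the boundary follows from Schauder theory, reducing the global $W^{2,\infty}$ claim to the interior one.
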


\subsection{Neural network discretization}
In this work, we will utilize neural networks to solve \re{model}. Generally speaking, a deep neural network with $l$ layers can be written as
\begin{equation*}
U(x;\theta) = h_{W_l,b_l} \circ \sigma \circ h_{W_{l-1},b_{l-1}} \circ \sigma \circ \cdots \circ \sigma \circ h_{W_1,b_1},
\end{equation*}
where $h_{W_i,b_i}(x) = W_i^T x + b_i$ is a linear function of $x$, $\sigma$ is the activation function (for example, sigmoid function), and $\theta = \{W_1,\cdots,W_l,b_1,\cdots,b_l\}$ are the parameters of the neural network model. 
Particularly, we will consider the two-layer neural network, which takes the following simple form
\begin{equation} \label{nn}
    U(x;\theta) = W_2^T \sigma(W_1^T x + b_1) + b_2.
\end{equation}

\section{Two neural-network-based schemes to solve \re{model}}

\subsection{Method 1}
\subsubsection{Scheme setup}
The first method follows directly from the variational form \re{variation1}. Since the set $K$ contains all the $H_0^1(\Omega)$ functions which are above the obstacle $\phi(x)$, we define
\begin{equation}
    \label{nm1-1}
    \delta_U = \max_{x\in \overline{\Omega}}\Big(\frac{\phi(x)}{\zeta(x)} - U(x;\theta)\Big)^+,
\end{equation}
and approximate $u$ as
\begin{equation}
\label{nm1-2}
    u \approx (U(x;\theta)+\delta_U)\zeta(x),
\end{equation}
where $U(x;\theta)$ is defined as the neural network approximation
\begin{equation}
    \label{nnm1new}
    U(x;\theta) = W_2^T \sigma(W_1^T x + b_1) + b_2,
\end{equation}
$\theta = \{W_1,W_2,b_1,b_2\}$ are the parameters in the neural network and $\zeta(x)$ is a smooth  cutoff function satisfying
\[
 \zeta(x) = 0 \mbox{ on } \p\Omega, \hspace{2em} \zeta(x)>0 \mbox{ in } \Omega,  \hspace{2em} |\nabla\zeta(x)|\neq 0 
      \mbox{ on } \p\Omega.
\]
Since $\delta_U$ is defined after taking $\max$ over $\overline{\Omega}$, $\delta_U$ is a constant with respect to $x$, and hence
\begin{equation}
    \label{deltaU}
    \nabla \delta_U = 0;
\end{equation}
in addition, 
\begin{equation}
\label{deltaU1}
    \nabla_\theta \delta_U = \Big(\frac{\p \delta_U}{\delta W_1},\frac{\p \delta_U}{\p b_1}, \frac{\p \delta_U}{\delta W_2},\frac{\p \delta_U}{\delta b_2}\Big)^T = \left\{
        \begin{array}{ll}
        \nabla_\theta \delta_U(x^*;\theta) \hspace{2em}&\text{ if }  \max\limits_{x\in\overline \Omega}\Big( \frac{\phi }{\zeta}- \frac{u }{\zeta }  \Big)^+ =    \frac{\phi(x^*) }{\zeta(x^*)}- \frac{u(x^*) }{\zeta(x^*) } > 0,\\
        0 &\mbox{ otherwise. }
        \end{array}
        \right.
\end{equation}

Based on our definitions, 
\begin{equation*}
\begin{split}
    (U(x;\theta)+\delta_U)\zeta(x) &\ge \Big(U(x;\theta) + \Big(\frac{\phi(x)}{\zeta(x)} - U(x;\theta)\Big)^+ \Big)\zeta(x)\\
    &\ge U(x;\theta)\zeta(x) + \Big(\phi(x) - U(x;\theta)\zeta(x)\Big)^+\\
    &=\max\Big(U(x;\theta)\zeta(x), \phi(x)\Big)  \ge \phi(x),
    \end{split}
\end{equation*}
hence the term in \re{nm1-2} which we use to approximate $u$ is definitely in the set $K$. According to the variational form \re{variation1}, we let the objective function to be
\begin{equation}
    \label{opt1}
    F_1(\theta) = F_1(X(\cdot;\theta)) =  \int_\Omega \Big(\frac12 \Big|\nabla \Big(\big(U(x;\theta)+\delta_U\big)\zeta(x)\Big) \Big|^2 - \big(U(x;\theta)+\delta_U \big)\zeta(x) f \Big) \dif x,
\end{equation}
and we are going to solve the optimization problem 
\begin{equation}\label{opp}
    \min_\theta F_1(\theta).
\end{equation}
By simple calculations and \re{deltaU}
\begin{equation}
\label{m1-1}
\begin{split}
\nabla \Big(\big(U(x;\theta)+\delta_U\big)\zeta(x)\Big) = \Big(\nabla U  + \nabla \delta_U  \Big)\zeta + \Big(U+\delta_U\Big)\nabla \zeta = \zeta\nabla U   + \Big(U+\delta_U\Big)\nabla \zeta.
    \end{split}
\end{equation}
In order to solve the optimization problem \re{opp}, we use the gradient descent method, as shown in Algorithm \ref{alg1}. The descent direction is computed as
\begin{equation}
\label{m1-2}
\begin{split}
    G_1(\theta) &\triangleq \nabla_\theta F_1(\theta)\\
    &=  \int_\Omega \nabla \Big(\big(U(x;\theta)+\delta_U\big)\zeta\Big) \cdot \nabla_\theta\Big(\nabla \Big(\big(U(x;\theta)+\delta_U\big)\zeta(x)\Big)\Big)  - \nabla_\theta \Big(U(x;\theta)+\delta_U \Big)\zeta f \dif x, 
    \end{split}
\end{equation}
where, by \re{m1-1}, 
\begin{eqnarray*}
    && \nabla_\theta\Big(\nabla \Big(\big(U(x;\theta)+\delta_U\big)\zeta(x)\Big)\Big) = \nabla_\theta\Big(\zeta\nabla U   + \big(U+\delta_U\big)\nabla \zeta\Big) = \zeta\nabla_\theta(\nabla U)  + \nabla_\theta U\nabla \zeta + \nabla_\theta \delta_U \nabla \zeta,
\end{eqnarray*}
and $\nabla_\theta \delta_U$ is computed 
in \re{deltaU1}.

\begin{algorithm}[H]
\caption{Gradient descent method for solving \re{opt1}}
\label{alg1}
\begin{algorithmic}
\STATE Choose an initial guess $\theta_1$
\FOR{$k=1,2,\cdots$}
\STATE Compute the descent direction $G_1(\theta) = \nabla_\theta F_1(\theta)$;
\STATE Choose a stepsize $\alpha_k > 0$;
\STATE Set the new iterate as $\theta_{k+1} = \theta_k -\alpha_k G_1(\theta_k) $;
\ENDFOR
\end{algorithmic}
\end{algorithm}

\subsubsection{Convergence theorem}
It is well known that neural networks can uniformly approximate any continuous function of $n$ real variables with support in the unit cube \cite{hornik1991approximation,cybenko1989approximation}. Using this universal approximation property of neural networks, we have, for any small $\delta_1>0$, there is a sum of the form \re{nnm1new} such that
\begin{equation}
    \Big|U(x;\theta) - \frac{u(x)}{\zeta(x)}\Big| < \delta_1 \hspace{2em} \text{for all } x\in \Omega,
\end{equation}
which implies $0\le \delta_U \le \delta_1$. It then follows that
\begin{equation}
    \Big|U(x;\theta)+\delta_U-\frac{u(x)}{\zeta(x)}\Big| \le \Big|U(x;\theta) - \frac{u(x)}{\zeta(x)}\Big| + \delta_U \le 2\delta_1 \hspace{2em} \text{for all } x\in \Omega.
\end{equation}
Since $\zeta$ is smooth, we can always find a constant $C$, which is independent of $\delta_1$, such that
\begin{equation}
    \big|\big(U+\delta_U\big)\zeta - u\big|\le C\delta_1 \hspace{2em} \text{for all } x\in \Omega,
\end{equation}
which implies the following convergence theorem.
\begin{thm}\label{thmm1}
Let $\sigma$ be continuous, bounded, and non-constant, then for any $\delta_1 > 0$, there exists a function $U$ implemented by the neural network approximation such that
\begin{equation*}
    \big|\big(U+\delta_U\big)\zeta - u\big| \le C\delta_1,
\end{equation*}
where the constant $C$ does not depend upon $\delta_1$. 
\end{thm}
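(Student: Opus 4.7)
The plan is to reduce the assertion directly to the universal approximation property of two-layer networks \cite{hornik1991approximation,cybenko1989approximation} applied to the quotient $u(x)/\zeta(x)$. More precisely, I would proceed in three steps: (i) show that $u/\zeta$ extends to a continuous function on $\overline{\Omega}$; (ii) invoke universal approximation to obtain a network $U(x;\theta)$ with $\|U - u/\zeta\|_{L^\infty(\Omega)} < \delta_1$; and (iii) control $\delta_U$ by $\delta_1$ using the obstacle constraint, then multiply through by $\zeta$.

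The hardest step is (i), since the definition of $\delta_U$ and the approximation target both divide by $\zeta$, which vanishes on $\partial\Omega$. Here one uses the hypotheses on $\zeta$ (namely $\zeta=0$ on $\partial\Omega$, $\zeta>0$ in $\Omega$, and $|\nabla\zeta|\neq 0$ on $\partial\Omega$) together with the regularity from Lemma \ref{lem2.1}, which gives $u\in W^{2,\infty}(\Omega)$ with $u=0$ on $\partial\Omega$. Locally near any boundary point one can straighten the boundary and write both $u$ and $\zeta$ as Lipschitz functions vanishing on the same hyperplane; since $\nabla\zeta\cdot\nu\neq 0$, L'Hôpital-type arguments (or equivalently, a Taylor expansion in the normal direction) show that $u/\zeta$ admits a continuous extension to $\overline{\Omega}$. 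This is the only step where the problem-specific structure matters; everything else is an application of tools already available.

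Once $u/\zeta\in C(\overline{\Omega})$, step (ii) is immediate from the Hornik/Cybenko theorem given the hypotheses on $\sigma$. For step (iii), since $u\ge \phi$ in $\Omega$, we have $\phi/\zeta \le u/\zeta \le U+\delta_1$, so
\begin{equation*}
0\le \delta_U = \max_{x\in\overline{\Omega}}\Big(\frac{\phi}{\zeta}-U\Big)^+ \le \delta_1,
\end{equation*}
and therefore
\begin{equation*}
\Big|U+\delta_U-\frac{u}{\zeta}\Big| \le \Big|U-\frac{u}{\zeta}\Big|+\delta_U \le 2\delta_1.
\end{equation*}
Multiplying by $\zeta$ and using $\|\zeta\|_{L^\infty(\Omega)}<\infty$ yields
\begin{equation*}
\big|(U+\delta_U)\zeta - u\big| \le 2\|\zeta\|_{L^\infty(\Omega)}\,\delta_1 = C\delta_1,
\end{equation*}
with $C$ independent of $\delta_1$. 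This is exactly the stated conclusion, so the only non-routine ingredient is the boundary behavior of $u/\zeta$ described in step (i).
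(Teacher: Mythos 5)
Your argument is essentially the paper's own proof: uniformly approximate $u/\zeta$ by a network $U$, observe that $u\ge\phi$ forces $0\le\delta_U\le\delta_1$, and multiply by $\zeta$ to pick up a constant depending only on $\|\zeta\|_{L^\infty}$. Your step (i) is a worthwhile addition that the paper leaves implicit: the paper invokes universal approximation on $u/\zeta$ without justifying that this quotient is continuous up to $\partial\Omega$ where $\zeta$ vanishes; the justification is exactly what you sketch, using $|\nabla\zeta|\neq 0$ on $\partial\Omega$, $u=0$ on $\partial\Omega$, and the fact that since $\phi<0$ on $\partial\Omega$ the obstacle is inactive near the boundary so $u\in C^{2+\alpha}$ there (a fact the paper only records later, in the discussion preceding Theorem \ref{lemm1-2}).
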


\subsubsection{Approximation rate}
In recent years, there are many quantitative estimates on the approximation rate of neural networks \cite{lu2020deep,siegel2020approximation, xu2020finite}. Using these results, we can also quantify the approximation rate of our method. To do that, we need higher order estimates for the universal approximation of neural networks. By \textit{Theorem 3} of \cite{hornik1991approximation}, we know that if the activation function $\sigma\in C^1(\mathbb{R})$ is nonlinear and bounded, then the space of all neural networks with the form \re{nnm1new} is dense in $C^1(\mathbb{R})$. Since we restrict $\phi < 0$ on $\p \Omega$, we have $u\in C^{2+\alpha}$ in a neighborhood of $ \p \Omega$ if $f\in C^\alpha(\overline{\Omega})$ then $\frac{u}{\zeta} \in C^1(\bar\Omega)$. Therefore, for any small $\delta_2 > 0$,  there is a sum of the form \re{nn} such that
\begin{equation}
    \Big\| U - \frac{u}{\zeta}\Big\|_{C^1(\overline{\Omega})} \le \delta_2.
\end{equation}
Based on the above estimate, we further have
$$0 \le \delta_U \le \delta_2+  \max_{x\in\overline \Omega}\bigg( \frac{\phi }{\zeta}- \frac{u }{\zeta }  \bigg)^+   \le \delta_2,
$$
and recall that $\delta_U$ is a constant in $x$. Clearly,
 $$
   \Big\| U  + \delta_U - \frac{u}{\zeta}\Big\|_{C^1(\overline{\Omega})} \le 2\delta_2. 
 $$
Again, we use the smoothness of the cutoff function $\zeta$ to derive
\begin{equation}
    \label{m1-11}
    \|(U+\delta_U)\zeta - u\|_{C^1(\overline{\Omega})} \le C\delta_2,
\end{equation}
where the constant $C$ is independent of $\delta_2$. Therefore, we have the following Theorem.

\begin{thm}\label{lemm1-2}
If $\p \Omega \in C^{2+\alpha}$, $f\in C^\alpha(\overline{\Omega})$, $\phi\in C^2(\overline{\Omega})$, and assume that the activation function $\sigma\in C^1(\mathbb{R})$ is nonlinear and bounded, then for any $\delta_2 > 0$, there exists a function $U$ implemented by the neural network approximation \re{nnm1new} such that
\begin{equation}\label{lem2eq11}
    \|(U+\delta_U)\zeta - u\|_{C^1(\overline{\Omega})} \le C\delta_2,
\end{equation}
where the constant $C$ does not depend upon $\delta_2$.
\end{thm}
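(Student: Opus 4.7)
The plan is to follow the road map that the paragraph preceding the theorem already sketches, filling in the steps that need justification. The core ingredient is Theorem 3 of Hornik (1991), which says that for $\sigma\in C^1(\mathbb{R})$ nonlinear and bounded, networks of the form \re{nnm1new} are dense in $C^1(\overline{\Omega})$. So the first task is to verify that the target $u/\zeta$ actually lies in $C^1(\overline{\Omega})$; once this is done the density result produces the desired $U$, and everything else is elementary bookkeeping.

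First I would establish that $u/\zeta\in C^1(\overline{\Omega})$. Inside $\Omega$ this is immediate from $\zeta>0$ and the regularity of $u$. The delicate point is the boundary behavior: since $\phi<0$ on $\p\Omega$ and $u=0$ on $\p\Omega$, continuity of $\phi$ and $u$ gives a neighborhood $\mathcal{N}$ of $\p\Omega$ in which $u>\phi$, so the obstacle is inactive and $-\Delta u=f$ there. By Lemma \ref{lem2.1} together with Schauder theory on $\mathcal{N}$, we get $u\in C^{2+\alpha}(\overline{\mathcal{N}})$. Since $u=\zeta=0$ on $\p\Omega$ and $|\nabla \zeta|\neq 0$ on $\p\Omega$, a standard Hadamard-type argument (or, locally, a division lemma along the inward normal) shows that $u/\zeta$ extends to a $C^1$ function up to $\p\Omega$.

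Next I would invoke Hornik's density result to choose, for arbitrary $\delta_2>0$, a network $U$ of the form \re{nnm1new} with $\|U-u/\zeta\|_{C^1(\overline{\Omega})}\le \delta_2$. To bound $\delta_U$, note that $u\ge\phi$ and $\zeta>0$ in $\Omega$ imply $\phi/\zeta\le u/\zeta$ in $\Omega$, while near $\p\Omega$ the quantity $\phi/\zeta\to-\infty$ so $(\phi/\zeta-U)^+$ vanishes there and the max in \re{nm1-1} is attained at an interior point. Hence
\begin{equation*}
0\le \delta_U = \max_{\overline{\Omega}}\Bigl(\frac{\phi}{\zeta}-U\Bigr)^{\!+}\le \max_{\overline{\Omega}}\Bigl(\frac{u}{\zeta}-U\Bigr)^{\!+}+\max_{\overline{\Omega}}\Bigl(\frac{\phi}{\zeta}-\frac{u}{\zeta}\Bigr)^{\!+}\le \delta_2.
\end{equation*}
Because $\delta_U$ is a constant in $x$ its gradient is zero, so adding it does not change the $C^1$ error: $\|U+\delta_U-u/\zeta\|_{C^1(\overline{\Omega})}\le 2\delta_2$.

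Finally I would multiply by $\zeta$. Writing $(U+\delta_U)\zeta-u=\bigl((U+\delta_U)-u/\zeta\bigr)\zeta$ and differentiating with the product rule, both resulting terms are pointwise bounded by a constant (depending only on $\|\zeta\|_{C^1(\overline{\Omega})}$) times $2\delta_2$, giving \re{lem2eq11}. The single nontrivial step in the whole argument is the $C^1$ extension of $u/\zeta$ up to $\p\Omega$; everything else is the observation that $\delta_U$ is constant and that $\zeta$ is smooth.
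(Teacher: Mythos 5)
Your argument is correct and follows the same approach the paper sketches: invoke Hornik's $C^1$ density, bound $\delta_U$ by $\delta_2$ using $u\ge\phi$ plus the approximation error, and absorb the multiplication by $\zeta$ into the constant. You usefully fill in two details the paper leaves implicit — the Hadamard-type argument giving $u/\zeta\in C^1(\overline{\Omega})$ near $\p\Omega$, and the observation that $(\phi/\zeta - U)^+$ vanishes near $\p\Omega$ so the $\max$ in \re{nm1-1} is attained in the interior — but these are the same underlying ideas made explicit, not a different route.
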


Furthermore, combining the above result with \textit{Corollary 1} from \cite{siegel2020approximation}, we can relate the approximation rate with the number of the neurons if the activation function is $\text{ReLu}^k$ (we will use $\text{ReLu}^2$ as our activation function in the later simulations). Therefore, we have the following theorem:

\begin{thm}\label{ratem1}
If the assumptions in Theorem \ref{lemm1-2} hold and 
let $\sigma(x) = ReLu^2(x) = \max\{0,x\}^2$, then there exists a function $U$ defined in \re{nnm1new} such that
\begin{equation*}
    \|u_{\vep_1} - u \|_{H^1(\Omega)} \le C N^{-\frac12}\|u\|_{B^2},
\end{equation*}
where the constant $C$ is independent of $N$, and $\|u\|_{B^2} = \int_{\mathbb{R}^d} (1+|\omega|)^2 |\hat{u} (\omega)|\dif \omega$ denotes the Barron norm of the function $u$.
\end{thm}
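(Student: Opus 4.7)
The plan is to combine the construction of Theorem \ref{lemm1-2} with the quantitative approximation rate of Corollary 1 in \cite{siegel2020approximation}, which delivers, for any $v$ in the Barron space $B^2$, a shallow $\mathrm{ReLu}^2$ neural network of the form \re{nnm1new} with $N$ neurons satisfying $\|v - V\|_{H^1(\Omega)} \le CN^{-1/2}\|v\|_{B^2}$. The goal is to show that the approximation $u_{\vep_1}:=(U+\delta_U)\zeta$ converges to $u$ in $H^1(\Omega)$ at this same rate.

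First I would apply Siegel's corollary to the quotient $u/\zeta$. By Lemma \ref{lem2.1} and the hypothesis $\phi<0$ on $\p\Omega$, $u\in C^{2+\alpha}$ in a neighborhood of $\p\Omega$ and $u=0$ there, so $u/\zeta$ extends to a well-defined function on $\overline\Omega$ whose Barron norm is controlled by a $\zeta$-dependent multiple of $\|u\|_{B^2}$. This produces a network $U$ of the form \re{nnm1new} with
\[
\|U - u/\zeta\|_{H^1(\Omega)} \le CN^{-1/2}\|u\|_{B^2}.
\]
Next I would estimate $\delta_U$. Since $u\ge\phi$ and $\zeta>0$ in $\Omega$, whenever $\phi(x)/\zeta(x) - U(x) > 0$ one also has $u(x)/\zeta(x) - U(x) > 0$, so
\[
0 \le \delta_U \le \max_{\overline\Omega}\bigl(u/\zeta - U\bigr)^+ \le \|U - u/\zeta\|_{C(\overline\Omega)}.
\]
For $p=1$ this is immediately bounded by a multiple of $\|U - u/\zeta\|_{H^1(\Omega)}$ via the Sobolev embedding; for $p=2$ the same bound follows from the $C^1$ regularity of $\mathrm{ReLu}^2$ networks together with a Morrey-type embedding applied at a slightly higher smoothness level, or from an $L^\infty$-valued refinement of Siegel's corollary.

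Finally, using that $\delta_U$ is constant in $x$ and $\zeta$ is smooth, I would decompose
\[
(U+\delta_U)\zeta - u = (U - u/\zeta)\zeta + \delta_U\,\zeta
\]
and apply the triangle inequality with the product rule in $H^1$ to conclude
\[
\|u_{\vep_1} - u\|_{H^1(\Omega)} \le C_\zeta\bigl(\|U - u/\zeta\|_{H^1(\Omega)} + |\delta_U|\bigr) \le CN^{-1/2}\|u\|_{B^2}.
\]
The main obstacle is controlling $\delta_U$, whose definition is pointwise whereas Siegel's corollary only delivers an $H^1$ estimate; in two dimensions this requires either exploiting the extra regularity of $\mathrm{ReLu}^2$ activations or strengthening the target norm in the quoted approximation result. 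A secondary technical point is verifying that $u/\zeta$ inherits a finite Barron norm from $u$, which is where the assumption $\phi<0$ on $\p\Omega$ and the boundary behavior of $\zeta$ enter.
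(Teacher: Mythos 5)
Your proposal matches the paper's intended argument in structure: the paper gives no detailed proof of Theorem~\ref{ratem1}, merely asserting that it follows by ``combining'' Theorem~\ref{lemm1-2} with Corollary~1 of \cite{siegel2020approximation}. Your three steps --- apply the Siegel--Xu rate to $u/\zeta$, estimate $\delta_U$, then multiply back by $\zeta$ and invoke the triangle inequality --- are the only sensible way to carry out that combination, so on that level the approach is the same.

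However, you have also correctly identified two genuine gaps that the paper silently skips over, and both of them are real. First, $\delta_U$ is defined by a pointwise maximum, so bounding it requires an $L^\infty$ estimate on $U - u/\zeta$, while Corollary~1 of \cite{siegel2020approximation} delivers only an $H^1$ rate; this is harmless when $p=1$ by Sobolev embedding, but for $p=2$ the embedding $H^1\hookrightarrow L^\infty$ fails and something extra is needed (either a higher-order Siegel--Xu estimate exploiting the $C^1$ regularity of $\mathrm{ReLu}^2$ networks, or a genuinely $L^\infty$-valued approximation rate). Second, the quantity that the network must approximate is $u/\zeta$, not $u$, and Barron norms are defined via an integral over all of $\mathbb{R}^d$; so the bound should involve the Barron norm of a suitable extension of $u/\zeta$ rather than $\|u\|_{B^2}$ as written, and one must verify that this extension actually has finite $B^2$ norm (your remark that $u\in C^{2+\alpha}$ near $\partial\Omega$ with $u=0$ there is the right ingredient, but the $\zeta$-dependence does not disappear). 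Note also that the notation $u_{\vep_1}$ in the theorem statement is never defined in the paper and evidently means $(U+\delta_U)\zeta$. In short, your proposal is more careful than the paper's one-line justification, and the obstacles you flag are weaknesses in the paper's statement and claimed proof, not defects in your reasoning.
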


\subsection{Method 2}
\subsubsection{The penalized system} 
The second method starts from a different approach: instead of directly analyzing the system \re{model}, we consider a penalized system, and a penalized term is added when the constraint $u\ge\phi$ is violated. 
To do that, we take $\beta_1$ such that
\begin{equation*}
    \beta_1 \in C^1(-\infty,\infty),\quad \beta_1'\ge 0, \quad \beta_1'' \ge 0,
\end{equation*}
an example of $\beta_1$ can be 
\begin{equation*}
\begin{split}
    &\beta_1(s) = \left\{
        \begin{array}{ll}
            s-1\hspace{2em} &\text{if } s \ge 2,\\
            \frac14 s^2 \hspace{2em} &\text{if } 0\le s\le 2,\\
            0\hspace{3.7em}&\text{if } s\le 0.
        \end{array}\right.
\end{split}
\end{equation*}
We then let $\beta_\vep(s)=\beta_1(s/\vep)$, where $\vep>0$, and consider the following approximation system, 
\begin{equation}\label{model2}
    \begin{array}{ll}
         -\Delta u_{\vep} - f =\beta_{\vep}(\phi-u_{\vep})  \hspace{2em}&\text{in } \Omega,  \\
         u_{\vep}=0 &\text{on } \p \Omega.
    \end{array}
\end{equation}
The above system is called the penalized system of \re{model}.

Correspondingly, there is also a variational formulation for the penalized system \re{model2}, i.e.,
\begin{equation}\label{variation2}
    \min_{u_{\vep} \in H_0^1(\Omega)} \int_\Omega \Big(\frac12 |\nabla u_{\vep}|^2 - u_{\vep} f + B_{\vep} (\phi-u_{\vep})\Big)\dif x,
\end{equation}
where $B_{\vep}(s) = \int_0^s \beta_{\vep}(\xi)\dif \xi$.

It can be shown that $u_{\vep}$ approximates $u$ well when ${\vep}$ tends to 0, and the difference can be bounded by the following lemma.

\begin{lem}\label{lem1}
If $\p \Omega\in C^{2+\alpha}$, $f\in C^\alpha(\overline{\Omega})$, $\phi\in C^2(\overline{\Omega})$, then system \re{model2} admits a unique solution $u_{\vep}\in C^{2+\alpha}(\overline{\Omega})$. Furthermore, for $C^*=\|(-\Delta \phi -f)^+\|_{L^\infty}$, the following estimate holds:
\begin{equation}
    \label{ueps}
    0\le u - u_{\vep} \le (C^*+1){\vep}\hspace{2em} \text{in }\Omega.
\end{equation}
\end{lem}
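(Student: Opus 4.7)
My proof will proceed in three stages: (a) existence, uniqueness, and $C^{2+\alpha}$ regularity of $u_\vep$; (b) the lower estimate $u_\vep \le u$; and (c) the upper estimate $u - u_\vep \le (C^*+1)\vep$. For (a), I will work from the variational form \re{variation2}: since $\beta_1' \ge 0$, the function $B_\vep$ is convex, so $J_\vep(v) := \int_\Omega (\tfrac12|\nabla v|^2 - vf + B_\vep(\phi - v))\,dx$ is strictly convex, coercive, and weakly lower semicontinuous on $H_0^1(\Omega)$, hence it admits a unique minimizer whose Euler--Lagrange equation is exactly \re{model2}. For regularity, a standard bootstrap suffices: Sobolev embedding in dimension $p\le 2$ gives $u_\vep \in L^q$ for every $q<\infty$, and since $\beta_\vep$ grows at most linearly the right-hand side lies in $L^q$; $L^p$ elliptic theory yields $u_\vep \in W^{2,q}$, which embeds into $C^{1,\alpha}$, so the right-hand side lies in $C^\alpha$, and Schauder estimates (using $\p\Omega \in C^{2+\alpha}$) upgrade $u_\vep$ to $C^{2+\alpha}(\overline\Omega)$.

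For (b), I will substitute the admissible competitor $v = \max(u,u_\vep) = u + (u_\vep - u)^+$ into the variational inequality satisfied by $u$, producing $\int_\Omega \nabla u \cdot \nabla (u_\vep - u)^+\,dx \ge \int_\Omega f(u_\vep - u)^+\,dx$. Pairing \re{model2} weakly against $(u_\vep - u)^+ \in H_0^1(\Omega)$ gives an equality, and on the set $\{u_\vep > u\}$ one has $u_\vep > u \ge \phi$, so $\beta_\vep(\phi - u_\vep) = 0$ there. Subtracting the two relations forces $\|\nabla(u_\vep - u)^+\|_{L^2}^2 \le 0$, whence $u_\vep \le u$ in $\Omega$.

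For (c) I will argue in two sub-steps. First, I will show $u_\vep \ge \phi - (C^*+1)\vep$ by applying the weak maximum principle to $w := u_\vep - \phi + (C^*+1)\vep \in C^2(\overline\Omega)$, which satisfies $w>0$ on $\p\Omega$ (since $\phi<0$ there) and $-\Delta w = \beta_\vep(\phi - u_\vep) - (-\Delta\phi - f) \ge \beta_\vep(\phi - u_\vep) - C^*$. If $w$ attained a negative interior minimum at some $x_0$, then $(\phi - u_\vep)(x_0) > (C^*+1)\vep$, and a short case check on the explicit formula for $\beta_1$ yields $\beta_1(s) > C^*$ whenever $s > C^*+1$ (on $s\ge 2$ this is $s-1 > C^*$; on $C^*+1 < s < 2$ it follows from $s^2/4 > (C^*+1)^2/4 \ge C^*$, the latter via $(C^*-1)^2 \ge 0$). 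This gives $-\Delta w(x_0) > 0$, contradicting $\Delta w(x_0) \ge 0$ at an interior min, so $w \ge 0$. Consequently $\tilde u := u_\vep + (C^*+1)\vep$ satisfies $\tilde u \ge \phi$, $\tilde u \ge 0$ on $\p\Omega$, and $-\Delta\tilde u \ge f$. Testing the variational inequality for $u$ with $v = \min(u,\tilde u) = u - (u-\tilde u)^+ \in K$, and subtracting the weak form of $-\Delta \tilde u \ge f$ paired with $(u-\tilde u)^+ \in H_0^1(\Omega)$, yields $\int_\Omega |\nabla(u-\tilde u)^+|^2\,dx \le 0$, whence $u \le \tilde u$, i.e., $u - u_\vep \le (C^*+1)\vep$.

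The main obstacle I anticipate is the first sub-step of (c): extracting the sharp constant $(C^*+1)$ requires using the piecewise structure of $\beta_1$ via the strict inequality $\beta_1(s) > C^*$ for $s > C^*+1$, and converting this strictness into a maximum-principle contradiction at the interior minimum (in particular, verifying that the borderline case $C^*=1$ still produces a strict inequality). The remaining ingredients---minimization of a convex functional, an elliptic bootstrap, and two standard variational-inequality / comparison arguments---are routine.
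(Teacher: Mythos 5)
Your argument is correct and yields the stated conclusion, including the sharp constant $(C^*+1)$, but it differs technically from the paper's proof. The paper establishes both $u_\vep \le u$ (Step~1) and $u \le u_\vep + (C^*+1)\vep$ (Step~3) by pointwise maximum-principle comparisons on classical solutions: Step~1 compares $u$ (a supersolution of the penalized equation, since $\beta_\vep(\phi-u)=0$) with $u_\vep$ using $\beta_\vep'\ge0$, and Step~3 compares $u$ and $u_\vep+(C^*+1)\vep$ on $\Omega\cap\{u>\phi\}$, checking the boundary values on $\p\Omega\cap\{u>\phi\}$ and on $\Omega\cap\p\{u=\phi\}$ separately. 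You instead run both comparisons through the variational inequality \re{lemn1}, with the competitors $v=\max(u,u_\vep)$ and $v=\min(u,\tilde u)$, which reduces each to $\|\nabla(\,\cdot\,)^+\|_{L^2}^2\le 0$. For the middle estimate $u_\vep + (C^*+1)\vep \ge \phi$, the paper studies the interior maximum of the auxiliary function $z=\beta_\vep(\phi-u_\vep)$, shows $0\le z\le C^*$, and then inverts $\beta_\vep$; you apply the maximum principle directly to $w=u_\vep-\phi+(C^*+1)\vep$ and use the explicit piecewise form of $\beta_1$ to get the contradiction $\beta_1(s)>C^*$ for $s>C^*+1$. Both routes are valid; the variational route has the merit of requiring only $H^1$ regularity of $u$ (so one never has to apply a pointwise maximum principle to the merely $W^{2,\infty}$ obstacle solution) and making the role of $\{u_\vep>u\}$ and $\{u>\tilde u\}$ transparent, whereas the paper's pointwise argument is more elementary and avoids the explicit computation with $\beta_1$. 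One small caution on your case check: when you write ``$(C^*+1)^2/4\ge C^*$, the latter via $(C^*-1)^2\ge0$,'' note that the required strictness of $\beta_1(s)>C^*$ is supplied by the strict inequality $s^2/4>(C^*+1)^2/4$ (since $s>C^*+1$), so the non-strict $\ge$ in the second link is harmless; it may be worth saying this explicitly so a reader does not worry about the borderline $C^*=1$, which in any case falls into the $s\ge2$ branch.
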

\begin{proof}
For system \re{model2}, we can use Schauder theory to prove that there exists a unique solution $u_{\vep} \in C^{2+\alpha}(\overline{\Omega})$. In order to prove \re{ueps}, we divide the proof into three steps.

\noindent \textbf{Step 1: } Since $u\ge \phi$, we have $\beta_{\vep}(\phi-u) \equiv 0$. Therefore,
\begin{eqnarray*}
        &-\Delta u - f - \beta_{\vep}(\phi-u) = -\Delta u - f \ge 0\hspace{2em}&\text{in } \Omega,\\
        &u = 0 &\text{on } \p\Omega.
\end{eqnarray*}
On the other hand, the equation for $u_{\vep}$ is
\begin{eqnarray*}
        &-\Delta u_{\vep} - f - \beta_{\vep}(\phi-u_{\vep}) =  0\hspace{2em}&\text{in } \Omega,\\
        &u_{\vep} = 0 &\text{on } \p\Omega.
\end{eqnarray*}
Since $\beta_{\vep}'\ge0$, we apply the maximum principle to conclude
$$u\ge u_{\vep} \hspace{2em}\text{in }\Omega,$$
and it proves the left-hand estimate in \re{ueps}.

\noindent \textbf{Step 2: } Let $z(x)=\beta_{\vep} (\phi-u_{\vep})$. Obviously, $z(x)\ge 0$ for $x\in\Omega$ and $z(x)|_{\p \Omega} = 0$. Assume that $z(x)$ takes a maximum at an interior point $x=x_0\in\Omega$, then $\phi-u_{\vep}$ also takes maximum at the same point, and hence $\Delta (\phi-u_{\vep})(x_0) \le 0$. It then follows that
\begin{equation*}
    z(x_0) = \beta_{\vep}(\phi- u_{\vep})(x_0) = -\Delta u_{\vep}(x_0) - f(x_0) \le -\Delta \phi(x_0) - f(x_0). 
\end{equation*}
Therefore, we derive that, for $x\in \overline{\Omega}$,
$$0\le \beta_{\vep}(\phi-u_{\vep}) \le -\Delta \phi(x_0) - f(x_0).$$
By the definition of $\beta_{\vep}$, we conclude
$$\phi-u_{\vep} \le (C^*+1){\vep},$$
where $C^* = \|(-\Delta \phi -f)^+\|_{L^\infty}$. Hence we have, for $x\in\overline{\Omega}$,
$$u_{\vep} + (C^*+1){\vep} \ge \phi.$$

\noindent \textbf{Step 3: } In order to prove the right-hand inequality of \re{ueps}, we need to compare $u$ with $u_{\vep} + (C^*+1){\vep}$. 

Since $u\ge \phi$, we first consider the case when $u=\phi$. Based on Step 2, we clearly obtain
$$u_{\vep} + (C^*+1){\vep} \ge \phi = u.$$

It remains to show $u_{\vep} + (C^*+1){\vep} \ge u$ in the region $x\in \Omega \,\cap\, \{u>\phi\}$, and we shall apply the maximum principle to prove it. Obviously, in $x\in \Omega \,\cap\, \{u>\phi\}$,
\begin{equation}
    \label{eqnu}
    -\Delta(u_{\vep} + (C^*+1){\vep}) - f = -\Delta u_{\vep} - f = \beta_{\vep}(\phi-u_{\vep}) \ge 0 = -\Delta u - f.
\end{equation}
Next, we need to consider the values of $u$ and $u_{\vep} + (C^*+1){\vep}$ on the boundary of $x\in \Omega \,\cap\, \{u>\phi\}$. Since $\p\{x\in \Omega \,\cap\, \{u>\phi\}\} = \{x\in \p\Omega \,\cap\, \{u>\phi\}\} \,\cup\, \{x\in\Omega \,\cap\, \p\{u=\phi\}\}$, we shall consider the boundary values on these two parts. When $ x\in \p\Omega \,\cap\, \{u>\phi\}$, we know $u = 0$, and $u_{\vep} + (C^*+1){\vep} \ge (C^*+1){\vep} \ge 0$, which implies
$$u_{\vep} + (C^*+1){\vep} \ge u\hspace{2em} x\in \p\Omega \,\cap\, \{u>\phi\}.$$
When $x\in\Omega \,\cap\, \p\{u=\phi\}$, we have $u=\phi$ and $u_{\vep} + (C^*+1){\vep} \ge \phi$ due to the Step 2. Hence, 
$$u_{\vep} + (C^*+1){\vep} \ge u\hspace{2em} x\in \Omega \,\cap\, \p\{u=\phi\}.$$
Combining with \re{eqnu}, we apply the maximum principle to derive 
$$u_{\vep} + (C^*+1){\vep} \ge u\hspace{2em} x\in \Omega,$$
and the proof is complete.
\end{proof}

\subsubsection{The neural network approximation}
Based on machine learning techniques, we approximate the solution $u_{\vep}$ to \re{model2} by a single hidden layer neutral network:
\begin{eqnarray}
    U(x;\theta) & = & W_2^T \,\sigma(W_1^T x + b_1) + b_2, \label{nn1}\\
     u_{\vep} & \approx &  U(x;\theta) \zeta(x), \label{nn2}
     \end{eqnarray}
where $\zeta$ is a smooth cutoff function such that
\[
 \zeta(x) = 0 \mbox{ on } \p\Omega, \hspace{2em} \zeta(x)>0 \mbox{ in } \Omega,  \hspace{2em} |\nabla\zeta(x)|\neq 0 
      \mbox{ on } \p\Omega.
\]
In the neural network, $\{W_1,W_2\}$ and $\{b_1,b_2\}$ are the weights and bias of the network, and $\sigma$ is the activation function such as ReLu function and sigmoid function. For simplicity, we denote the set of all parameters as $\theta = \{W_1,W_2,b_1,b_2\}$.

Based on the variational formulation \re{variation2}, we consider the objective function as
\begin{equation}
    \label{loss}
    F(\theta) = F(U(\cdot;\theta)\zeta(\cdot)) = \int_\Omega \Big(\frac12|\nabla (U\zeta)|^2 - U\zeta f + B_\vep(\phi-U\zeta)\Big) \dif x,
\end{equation}
and the problem is reformulated as the following optimization problem
\begin{eqnarray}
    \label{min}
    \min_{\theta} F(\theta),
\end{eqnarray}
which can also be solved by the gradient descent method. The algorithm is very similar as Algorithm \ref{alg1} except that 
\begin{equation}
    G_2(\theta) = \nabla_\theta F_2(\theta) = \int_\Omega \Big( \nabla(U\zeta)\cdot \nabla_\theta\big(\nabla(U\zeta)\big) - \nabla_\theta (U \zeta f) + \nabla_\theta\big(B_\vep(\phi-U\zeta)\big) \Big) \dif x, 
\end{equation}
where
\begin{eqnarray*}
    && \nabla(U\zeta) = \zeta\nabla U   + U\nabla\zeta,\\
    && \nabla_\theta\big(\nabla(U\zeta)\big) =\zeta \nabla_\theta(\nabla U)   + \nabla_\theta U \nabla \zeta,\\
    && \nabla\big(B_\vep(\phi- U\zeta)\big) = -\beta(\phi-U\zeta)\nabla_\theta U \zeta.
\end{eqnarray*}

\subsubsection{Convergence theorem}
Similar as in the analysis of Method 1, we can use the universal approximation theorem \cite{hornik1991approximation,cybenko1989approximation} and the smoothness of $\zeta$ to obtain that, for any small $\delta_3 >0$, 
\begin{equation}\label{m2a}
    \big|U\zeta - u_{\vep}\big| \le C\delta_3 \hspace{2em} \text{for all } x\in \Omega,
\end{equation}
where the constant $C$ is independent of $\delta_3$ and ${\vep}$.

After we show \re{m2a}, we can combine it with Lemma \ref{lem1}, hence we have
\begin{equation}\label{nne}
    \big|u - U\zeta\big| \le \big|u-u_{\vep}\big| + \big|u_{\vep} - U\zeta\big| \le (C^*+1){\vep} + C\delta_3 \le C\delta_3,
\end{equation}
which indicates that the difference between the unknown function $u$ and the solution solved by using neural networks can be bounded by $C\delta_3$ (or $C{\vep}$). Therefore, we have the following convergence theorem for Method 2:
\begin{thm}\label{thmm2}
If $\p \Omega\in C^{2+\alpha}$, $f\in C^\alpha(\overline{\Omega})$, $\phi\in C^2(\overline{\Omega})$, and 
let $\sigma$ be continuous, bounded, and non-constant, then for any $\delta_3 > 0$ and $\vep = \delta_3$, there exists a function $U$ implemented by \re{nn1} such that
\begin{equation*}
    \big|u - U\zeta\big| \le C\delta_3,
\end{equation*}
where the constant $C$ does not depend upon $\delta_3$ (and $\vep$). 
\end{thm}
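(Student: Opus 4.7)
The plan is to exploit the decomposition $u - U\zeta = (u - u_\vep) + (u_\vep - U\zeta)$: the first term is controlled by Lemma \ref{lem1}, which yields $|u - u_\vep| \le (C^*+1)\vep$, and the second term needs to be controlled by a uniform universal-approximation estimate for $u_\vep$. Choosing $\vep = \delta_3$ at the end will balance the two errors and produce the asserted bound.

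The only substantive step is to prove the intermediate estimate (\ref{m2a}), namely that for every $\delta_3 > 0$ there is a network $U$ of the form (\ref{nn1}) with $|U\zeta - u_\vep| \le C\delta_3$ on $\overline{\Omega}$ and $C$ independent of both $\delta_3$ and $\vep$. First I would argue that the quotient $u_\vep/\zeta$ extends to a continuous function on $\overline{\Omega}$: by Lemma \ref{lem1}, $u_\vep \in C^{2+\alpha}(\overline{\Omega})$ with $u_\vep|_{\p\Omega} = 0$; since $\zeta$ vanishes on $\p\Omega$ with $|\nabla\zeta|\neq 0$ there, a local straightening of $\p\Omega$ combined with Taylor expansion in the normal direction shows $u_\vep/\zeta \to (\p_\nu u_\vep)/(\p_\nu \zeta)$ continuously as $x \to \p\Omega$. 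With $u_\vep/\zeta \in C(\overline{\Omega})$, the universal approximation theorem of Cybenko/Hornik applies because $\sigma$ is continuous, bounded and non-constant; this delivers a network $U$ with $\|U - u_\vep/\zeta\|_{L^\infty(\Omega)} \le \delta_3$. Multiplying by $\zeta$ and using $\|\zeta\|_{L^\infty}$ as $C$ gives (\ref{m2a}).

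Combining the two ingredients by the triangle inequality and setting $\vep = \delta_3$,
\begin{equation*}
|u - U\zeta| \;\le\; |u - u_\vep| + |u_\vep - U\zeta| \;\le\; (C^*+1)\vep + C\delta_3 \;\le\; \widetilde{C}\,\delta_3,
\end{equation*}
which is exactly the claim of the theorem.

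The only conceptual obstacle I expect is the independence of the constant $C$ in (\ref{m2a}) from $\vep$. One might worry that, as $\vep \to 0$, $u_\vep$ approaches $u\in W^{2,\infty}$ and any approximation bound depending on higher Sobolev or Hölder norms of $u_\vep$ could blow up. This is circumvented precisely because the universal approximation step requires only continuity of the target $u_\vep/\zeta$, and the multiplicative constant produced by multiplying the uniform approximation by $\zeta$ is just $\|\zeta\|_{L^\infty(\Omega)}$, which depends on neither $\vep$ nor $\delta_3$. Consequently, the balancing $\vep = \delta_3$ is legitimate and the theorem follows.
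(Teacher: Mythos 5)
Your proposal is correct and follows essentially the same route as the paper: split $u - U\zeta$ into $(u-u_\vep)+(u_\vep-U\zeta)$, bound the first term by Lemma~\ref{lem1}, bound the second by the Cybenko--Hornik universal approximation theorem applied to $u_\vep/\zeta$ (with the constant reduced to $\|\zeta\|_{L^\infty}$), and set $\vep=\delta_3$. You go a bit further than the paper in one spot---the paper only says ``similar as in the analysis of Method 1''---by explicitly justifying that $u_\vep/\zeta$ extends continuously to $\overline\Omega$ via $u_\vep\in C^{2+\alpha}(\overline\Omega)$, $u_\vep|_{\p\Omega}=0$, and the nondegeneracy of $\nabla\zeta$ on $\p\Omega$; this is a genuine gap in the paper's exposition that you correctly fill, and your observation about the $\vep$-independence of the approximation constant is exactly the right thing to verify.
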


\subsubsection{Approximation rate}
Similarly, we can also obtain the approximation rate for the Method 2. 
To do that, the $L^\infty$ estimate in \re{ueps} is not enough, and we shall work on higher order estimates of $u-u_{\vep}$, which are included in the Lemma \ref{lem2}.

\begin{lem}
If $u$ is a solution to the problem \re{model}, then for any $v \in K$, the following inequality holds
\begin{equation}\label{lemn1}
\int_\Omega \nabla u \cdot \nabla(v- u) \dif x \ge \int_\Omega f\cdot (v-u) \dif x.
\end{equation}
\end{lem}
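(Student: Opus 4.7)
The plan is to prove this standard variational inequality by combining integration by parts with the complementarity conditions in \re{model}. Since $u \in W^{2,\infty}(\Omega)$ by Lemma \ref{lem2.1}, the Laplacian $\Delta u$ exists almost everywhere and is bounded, so all of the integrals to be considered are finite and classical Green's identities apply.

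First I would perform integration by parts. For $v \in K \subset H_0^1(\Omega)$ and $u \in H_0^1(\Omega)$, the difference $v - u$ belongs to $H_0^1(\Omega)$ and therefore vanishes on $\p\Omega$ in the trace sense. Hence the boundary term disappears and
\begin{equation*}
  \int_\Omega \nabla u \cdot \nabla (v - u)\, dx = -\int_\Omega \Delta u \cdot (v - u)\, dx = \int_\Omega (-\Delta u)(v - u)\, dx.
\end{equation*}

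Next I would exploit the complementarity structure. Write $v - u = (v - \phi) - (u - \phi)$ and decompose
\begin{equation*}
  \int_\Omega (-\Delta u - f)(v - u)\, dx = \int_\Omega (-\Delta u - f)(v - \phi)\, dx \;-\; \int_\Omega (-\Delta u - f)(u - \phi)\, dx.
\end{equation*}
The third line of \re{model} forces the second integrand to vanish pointwise a.e., so that term is zero. For the first integral, the inequality $-\Delta u - f \ge 0$ together with the constraint $v \ge \phi$ (i.e.\ $v - \phi \ge 0$) shows the integrand is non-negative a.e. Thus
\begin{equation*}
  \int_\Omega (-\Delta u)(v - u)\, dx \ge \int_\Omega f (v - u)\, dx,
\end{equation*}
and combining with the identity from integration by parts yields \re{lemn1}.

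I do not anticipate a real obstacle here; the only subtle point is justifying integration by parts at the $W^{2,\infty}$ regularity level rather than $C^2$, but this is standard since $W^{2,\infty} \subset W^{2,2}$ on a bounded domain and Green's identity holds in $H_0^1$ versus $H^2$. Everything else is a direct algebraic manipulation using the pointwise conditions from \re{model} and the definition of $K$.
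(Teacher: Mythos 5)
Your proof is correct, but it takes a genuinely different route from the paper. You argue from the \emph{strong} formulation \re{model}: integrate by parts (justified since $u \in W^{2,\infty}(\Omega) \subset H^2(\Omega)$), then split $v - u = (v - \phi) - (u - \phi)$ and invoke the three pointwise conditions $-\Delta u - f \ge 0$, $v - \phi \ge 0$, and $(-\Delta u - f)(u - \phi) = 0$ to conclude $\int_\Omega (-\Delta u - f)(v-u)\,dx \ge 0$. The paper instead argues from the \emph{minimization} formulation \re{variation1}: since $K$ is convex, $u + \tau(v-u) \in K$ for $\tau \in [0,1]$, so $\widetilde F(\tau) = J[u+\tau(v-u)]$ attains its minimum at $\tau = 0$, and $\widetilde F'(0) \ge 0$ is exactly \re{lemn1}. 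Your approach is more explicit about which structural feature of the obstacle problem drives the inequality (the complementarity condition), but it leans on the $W^{2,\infty}$ regularity from Lemma \ref{lem2.1} to make $\Delta u$ and the integration by parts meaningful. The paper's argument is softer and works entirely at the $H^1_0$ level with no regularity beyond what defines $K$; it also makes clear that \re{lemn1} is nothing but the first-order optimality condition for the convex constrained minimization, which is the cleaner conceptual takeaway. Both are standard and both are valid; if one wanted to state the lemma for weaker data where $W^{2,\infty}$ is not available, the paper's route would still go through while yours would need adjustment.
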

\begin{proof}
From \re{variation1}, we know that $J[u] = \min\limits_{v\in K} J[v]$ where $J[u] = \int_\Omega \Big(\frac12 |\nabla u|^2 - uf\Big) \dif x$. For any $v\in K$ and $\tau\in[0,1]$, $(1-\tau)u + \tau v = u + \tau(v-u)$ is also in $K$, hence $J[u+\tau(v-u)] \ge J[u]$. If we denote $\widetilde{F}(\tau) = J[u+\tau(v-u)]$, we immediately have $\widetilde{F}'(0) \ge 0$, which directly leads to inequality \re{lemn1}.
\end{proof}

The following lemma gives $H^1$ estimate, with the constant $C$ given explicitly. 
We should point out that this estimate can be obtained immediately by the standard interpolation techniques, but the
constant will not be explicit.
\begin{lem}\label{lem2}
If $\p \Omega \in C^{2+\alpha}$, $f\in C^\alpha(\overline{\Omega})$, $\phi\in C^2(\overline{\Omega})$, then there exists a constant $C$ which is independent of ${\vep}$ such that 
\begin{equation}\label{lem2eq12}
    \|u-u_{\vep}\|_{H^1(\Omega)} \le C\sqrt{{\vep}},
\end{equation}
where $C = \sqrt{2C^*(C^*+1)|\Omega|}$,  $C^*=\|(-\Delta \phi -f)^+\|_{L^\infty}$.
\end{lem}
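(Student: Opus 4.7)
The plan is to combine the variational inequality from Lemma \ref{lemn1} with the weak form of the penalized equation \re{model2}, tested against carefully chosen functions so that the bad cross terms cancel and only a controllable penalty contribution survives.

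First, I would test the penalized equation $-\Delta u_{\vep} - f = \beta_{\vep}(\phi - u_{\vep})$ with $u_{\vep} - u \in H_0^1(\Omega)$ and integrate by parts:
\begin{equation*}
\int_\Omega \nabla u_{\vep} \cdot \nabla(u_{\vep} - u)\dif x = \int_\Omega f(u_{\vep} - u)\dif x + \int_\Omega \beta_{\vep}(\phi - u_{\vep})(u_{\vep} - u)\dif x.
\end{equation*}
Next, I would use \re{lemn1} with the admissible test function $v = \max(u_{\vep},\phi) = u_{\vep} + (\phi - u_{\vep})^+$. This $v$ lies in $K$: by construction $v\ge\phi$, and $v\equiv 0$ on $\p\Omega$ since $u_{\vep}=0$ while $\phi<0$ there. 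Writing $v-u = (u_{\vep}-u) + (\phi-u_{\vep})^+$, \re{lemn1} becomes
\begin{equation*}
\int_\Omega \nabla u \cdot \nabla(u_{\vep}-u)\dif x + \int_\Omega \nabla u \cdot \nabla(\phi-u_{\vep})^+\dif x \ge \int_\Omega f(u_{\vep}-u)\dif x + \int_\Omega f(\phi-u_{\vep})^+\dif x.
\end{equation*}

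Subtracting this inequality from the tested PDE identity, the $\int f(u_{\vep}-u)\dif x$ terms cancel, and integrating $\int \nabla u \cdot \nabla(\phi-u_{\vep})^+\dif x$ by parts (legal since $(\phi-u_{\vep})^+\in H_0^1(\Omega)$ and $u\in W^{2,\infty}$) yields
\begin{equation*}
\int_\Omega |\nabla(u_{\vep}-u)|^2\dif x \le \int_\Omega \beta_{\vep}(\phi-u_{\vep})(u_{\vep}-u)\dif x + \int_\Omega (-\Delta u - f)(\phi-u_{\vep})^+\dif x.
\end{equation*}
I would then argue the first right-hand term is non-positive: on $\{\phi\le u_{\vep}\}$, $\beta_{\vep}(\phi-u_{\vep})=0$ by the choice of $\beta_1$; on $\{\phi>u_{\vep}\}$, one has $u\ge\phi>u_{\vep}$, so $u_{\vep}-u\le 0$ while $\beta_{\vep}\ge 0$. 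For the second term, I would use that $-\Delta u=f$ on $\{u>\phi\}$ and $\Delta u=\Delta\phi$ a.e.\ on $\{u=\phi\}$, giving $-\Delta u - f \le (-\Delta\phi - f)^+ \le C^*$ a.e.\ in $\Omega$. Combined with the uniform bound $(\phi-u_{\vep})^+\le(C^*+1)\vep$ from Step~2 of the proof of Lemma \ref{lem1}, this term is at most $C^*(C^*+1)|\Omega|\vep$, so $\|\nabla(u-u_{\vep})\|_{L^2}^2 \le C^*(C^*+1)|\Omega|\vep$.

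To upgrade the gradient estimate to the full $H^1$ norm, I would invoke the $L^\infty$ bound \re{ueps} to get $\|u-u_{\vep}\|_{L^2}^2 \le (C^*+1)^2\vep^2|\Omega|$, which for $\vep$ small compared to $C^*/(C^*+1)$ is dominated by the gradient contribution, producing the stated constant $\sqrt{2C^*(C^*+1)|\Omega|}$. The main obstacle, in my view, is identifying the correct admissible competitor $v=\max(u_{\vep},\phi)$ in $K$ — once this choice is made, the cross terms cancel cleanly and the remainder is driven entirely by the one-sided penetration bound $(\phi-u_{\vep})^+=O(\vep)$ already established in Lemma \ref{lem1}.
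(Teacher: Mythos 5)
Your proof follows essentially the same route as the paper: the same admissible competitor $v=\max(u_\vep,\phi)\in K$ in the variational inequality, the same testing of the penalized PDE against $u_\vep-u$, and subtraction to isolate $\int_\Omega|\nabla(u_\vep-u)|^2\dif x$. You refine the paper's argument in two small ways --- you observe that $\int_\Omega\beta_\vep(\phi-u_\vep)(u_\vep-u)\dif x\le 0$ and simply drop it, whereas the paper bounds this term separately by $C^*(C^*+1)|\Omega|\vep$, and you explicitly supply the $L^2$ contribution to the $H^1$ norm via the uniform estimate of Lemma~\ref{lem1} (a step the paper glosses over when passing from its gradient bound to the stated $H^1$ estimate) --- so your proof is a slightly tighter, more complete version of the same argument.
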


\begin{proof}
Set $v = \max\{u_{\vep},\phi\}$, Apparently, $v\in K$, hence we recall \re{lemn1} to obtain
\begin{equation}\label{lem2eq2}
    \int_\Omega \nabla u \cdot \nabla (\max\{u_{\vep},\phi\} - u) \dif x \ge \int_\Omega f\cdot \big(\max\{u_{\vep},\phi\}- u\big) \dif x.
\end{equation}
The LHS of \re{lem2eq2} can be rewritten as 
\begin{equation*}
\begin{split}
    \int_\Omega \nabla u \cdot \nabla (\max\{u_{\vep}, \phi\} - u) &= \int_{\Omega \cap \{u_{\vep} \ge \phi\}} \nabla u \cdot \nabla (u_{\vep} - u) \dif x + \int_{\Omega \cap \{u_{\vep} < \phi\}} \nabla u \cdot \nabla (\phi - u)\dif x \\
    &= \int_\Omega \nabla u \cdot \nabla (u_{\vep}-u) \dif x + \int_{\Omega \cap \{u_{\vep} < \phi\}} \nabla u \cdot \nabla (\phi-u_{\vep}) \dif x.
    \end{split}
\end{equation*}
A similar manner is also applied to the RHS of \re{lem2eq2}. As a result, we obtain 
\begin{equation}
    \label{lem2eq3}
    \int_\Omega \nabla u \cdot \nabla (u_{\vep} - u) \dif x \ge \int_\Omega f(u_{\vep} - u) \dif x + \int_{\Omega \cap \{u_{\vep} < \phi\}} f(\phi - u_{\vep})\dif x - \int_{\Omega \cap \{u_{\vep} < \phi\}} \nabla u \cdot \nabla (\phi - u_{\vep}) \dif x.
\end{equation}

On the other hand, we now turn out attention to \re{model2}. We multiply both sides of \re{model2} with the test function $u_{\vep} - u$ and integrate over $\Omega$. From integration by parts, we obtain
From integration by parts, we obtain
\begin{equation}
    \label{lem2eq4}
    \int_\Omega \nabla u_{\vep} \cdot \nabla (u_{\vep} - u) \dif x  = \int_\Omega f (u_{\vep} - u) \dif x + \int_\Omega \beta_{\vep} (\phi - u_{\vep}) (u_{\vep} - u) \dif x.
\end{equation}
Subtracting \re{lem2eq2} from \re{lem2eq4} and applying integration by parts again, 
we derive
\begin{equation*}
\begin{split}
    \int_\Omega |\nabla(u_{\vep} - u)|^2 \dif x &\le \int_\Omega \beta_{\vep}(\phi - u_{\vep}) (u_{\vep} - u) \dif u
    + \int_{\Omega \cap \{u_{\vep} < \phi\}} \Big(-f(\phi - u_{\vep}) + \nabla u \cdot \nabla (\phi - u_{\vep})\Big) \dif x\\
    &= \int_\Omega \beta_{\vep} (\phi - u_{\vep}) (\phi - u_{\vep}) \dif x - \int_\Omega f(\phi - u_{\vep})^+ \dif x + \int_\Omega \nabla u\cdot \nabla((\phi-u_{\vep})^+) \dif x. \\
    &= \int_\Omega \beta_{\vep} (\phi - u_{\vep}) (u_{\vep} - u) \dif x + \int_\Omega (-\Delta u -f) (\phi-u_{\vep})^+ \dif x \\
    &\le \int_\Omega \beta_{\vep} (\phi - u_{\vep}) (u_{\vep} - u) \dif x + \int_\Omega (-\Delta u -f) (u-u_{\vep})^+ \dif x,
    \end{split}
\end{equation*}
where the last inequality is based on the fact that $\phi - u \le 0$ so that
\begin{equation*}
    (\phi - u_{\vep})^+ \le \big( \phi - u + u - u_{\vep}\big)^+ 
    \le (u-u_{\vep})^+.
\end{equation*}
Therefore, we have
\begin{equation*}
\begin{split}
    \int_\Omega |\nabla(u_{\vep} - u)|^2 \dif x
    &\le \int_\Omega \|\beta_{\vep}\|_{L^\infty} \|u_{\vep} - u\|_{L^\infty} \dif x + \int_\Omega \|(-\Delta u -f)^+\|_{L^\infty} \|u_{\vep} - u\|_{L^\infty} \dif x\\
    &= \big(\|\beta_{\vep}\|_{L^\infty} + \|(-\Delta u -f)^+\|_{L^\infty}\big)\|u_{\vep}-u\|_{L^\infty}|\Omega|.
    \end{split}
\end{equation*}
Combining it with the estimates in Step 2 (which implies that 
$\|(-\Delta u -f)^+\|_{L^\infty} \le C^*$)
and \re{ueps} in Lemma \ref{lem1}, we further have
\begin{equation*}
    \int_\Omega |\nabla(u_{\vep} - u)|^2 \dif x \le (2C^*)(C^*+1) |\Omega|\vep,
\end{equation*}
which implies \re{lem2eq12} by taking $C = \sqrt{2C^*(C^*+1)|\Omega|}$.
\end{proof}

After we derive the estimates of $\|u - u_{\vep}\|_{H^1(\Omega)}$, let us consider $\|u- U\zeta\|_{H^1(\Omega)}$. By \textit{Theorem 3} of \cite{hornik1991approximation} and the smoothness of the cutoff function $\zeta$, we have for any small $\delta_4 > 0$, there is a sum of the form \re{nn1} such that 
\begin{equation}\label{m2n1}
    \|U\zeta - u_{\vep}\|_{H^1(\Omega)} \le \|U\zeta - u_{\vep}\|_{C^1(\Omega)} \le C\delta_4,
\end{equation}
if the activation function $\sigma \in C^1(\mathbb{R})$ is nonconstant and bounded. The constant $C$ in \re{m2n1} is independent of $\delta_4$. Following the similar procedures as in \re{nne}, and taking $\vep = O(\delta_4^2)$, we shall get the following theorem.

\begin{thm}\label{thm3.2}
If $\p \Omega\in C^{2+\alpha}$, $f\in C^\alpha(\overline{\Omega})$, $\phi\in C^2(\overline{\Omega})$, and we assume the activation function $\sigma$ is continuously differentiable, nonconstant, and bounded, then for any $\delta_4 > 0$ and $\vep = O(\delta_4^2)$, there exists a function $U$ implemented by \re{nn1} such that 
\begin{equation*}
    \|u - U\zeta\|_{H^1(\Omega)} \le C\delta_4,\mbox{ or equivalently \,}  \|u - U\zeta\|_{H^1(\Omega)} \le C\sqrt{\vep},
\end{equation*}
where the constant $C$ is independent of $\delta_4$. 
\end{thm}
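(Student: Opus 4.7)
The plan is a straightforward triangle-inequality splitting: insert the penalized solution $u_\vep$ between $u$ and $U\zeta$ and control each piece separately. Specifically, I would write
\[
\|u-U\zeta\|_{H^1(\Omega)} \le \|u-u_\vep\|_{H^1(\Omega)} + \|u_\vep - U\zeta\|_{H^1(\Omega)},
\]
and then estimate the two terms independently. The first term is exactly what Lemma \ref{lem2} provides: $\|u-u_\vep\|_{H^1(\Omega)} \le C\sqrt{\vep}$, with $C$ independent of $\vep$. So the first step reduces to recalling this lemma and noting it gives the penalty-approximation rate in $H^1$ rather than only in $L^\infty$.

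The second term is handled by neural network approximation. First I would note that by Lemma \ref{lem1}, $u_\vep \in C^{2+\alpha}(\overline{\Omega})$, and since $\phi<0$ on $\p\Omega$, one has $u_\vep>\phi$ in a neighborhood of $\p\Omega$ (for $\vep$ small enough), so $u_\vep$ vanishes at $\p\Omega$ at the same rate $\zeta$ does; hence $u_\vep/\zeta \in C^1(\overline{\Omega})$. Applying \emph{Theorem 3} of \cite{hornik1991approximation}, the density of two-layer networks with continuously differentiable, nonconstant, bounded activations in $C^1(\overline{\Omega})$ yields a network $U$ of the form \re{nn1} with
\[
\Bigl\|U - \frac{u_\vep}{\zeta}\Bigr\|_{C^1(\overline{\Omega})} \le \delta_4.
\]
Multiplying by $\zeta$ and using the product rule together with the smoothness of the cutoff, I would obtain $\|U\zeta - u_\vep\|_{C^1(\overline{\Omega})} \le C\delta_4$, which is the estimate \re{m2n1} already quoted in the text. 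Since $\Omega$ is bounded, the $C^1$ bound immediately upgrades to an $H^1$ bound: $\|U\zeta - u_\vep\|_{H^1(\Omega)} \le C\delta_4$.

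To finish, I would choose the scaling $\vep = O(\delta_4^2)$ so that $\sqrt{\vep} = O(\delta_4)$, which balances the two contributions and gives
\[
\|u-U\zeta\|_{H^1(\Omega)} \le C\sqrt{\vep} + C\delta_4 \le C\delta_4,
\]
proving the theorem. The expected main obstacle is the second step, i.e., verifying that $u_\vep/\zeta$ is genuinely in $C^1(\overline{\Omega})$ uniformly up to the boundary so that the Hornik density theorem can be invoked with a constant independent of $\vep$; this requires that, for sufficiently small $\vep$, the penalty term $\beta_\vep(\phi - u_\vep)$ vanishes in a fixed neighborhood of $\p\Omega$ (thanks to $\phi<0$ on $\p\Omega$), so that standard Schauder estimates yield a $C^{2+\alpha}$-bound on $u_\vep$ near $\p\Omega$ that is uniform in $\vep$, and hence a $\vep$-uniform constant $C$ in the final inequality.
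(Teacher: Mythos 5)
Your proposal matches the paper's argument: split $\|u-U\zeta\|_{H^1}$ via the triangle inequality using $u_\vep$, bound the first piece by Lemma~\ref{lem2}, bound the second piece by the Hornik $C^1$-density theorem together with the smoothness of $\zeta$ and a trivial $C^1\hookrightarrow H^1$ embedding on the bounded domain (this is precisely~\re{m2n1}), and then balance with $\vep=O(\delta_4^2)$. One small point: the ``main obstacle'' you flag — getting a $C^1(\overline\Omega)$ bound on $u_\vep/\zeta$ that is uniform in $\vep$ — is not actually needed here. Hornik's Theorem~3 is a pure density statement, not a quantitative rate, so for each fixed $\vep$ it hands you a network $U$ with $\|U-u_\vep/\zeta\|_{C^1(\overline\Omega)}\le\delta_4$ with no constant that could carry a hidden $\vep$-dependence; the only constants that appear afterward come from $\zeta$ (product rule) and $|\Omega|$ (passing from $C^1$ to $H^1$), both manifestly independent of $\vep$ and $\delta_4$. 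What you do need for each fixed $\vep$ is simply that $u_\vep/\zeta\in C^1(\overline\Omega)$, which follows from $u_\vep\in C^{2+\alpha}(\overline\Omega)$ vanishing on $\p\Omega$ and $\zeta$ vanishing linearly there with $|\nabla\zeta|\ne0$. The $\vep$-uniform Schauder bound near $\p\Omega$ that you sketch would matter for the quantitative Theorem~\ref{thm3.6} (where the size of $\|u_\vep\|_{B^2}$ enters), but not for this qualitative density-based statement.
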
 

Again, together with the \textit{Corollary 1} from \cite{siegel2020approximation}, we have the following approximation rate when the activation function is $\text{ReLu}^2$:

\begin{thm}\label{thm3.6}
If the assumptions in Theorem \ref{thm3.2} hold, and 
let $\sigma(x) = ReLu^2(x) = \max\{0,x\}^2$, then there exists a function $U$ implemented by \re{nn1} such that
\begin{equation*}
    \|u - U\zeta\|_{H^1(\Omega)} \le C N^{-\frac12}\|u_{\vep}\|_{B^2},
\end{equation*}
where the constant $C$ is independent of $N$, and $\|u_{\vep}\|_{B^2} = \int_{\mathbb{R}^d} (1+|\omega|)^2 |\hat{u}_{\vep} (\omega)|\dif \omega$ denotes the Barron norm of the function $u_{\vep}$.
\end{thm}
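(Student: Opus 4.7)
The plan is to combine the $H^1$ convergence of the penalized solution $u_\vep \to u$ from Lemma~\ref{lem2} with a quantitative neural network approximation result for $u_\vep$ in terms of the number of neurons $N$, and then balance the two error contributions by choosing $\vep$ appropriately.

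First, I would invoke Lemma~\ref{lem2}, which gives $\|u - u_\vep\|_{H^1(\Omega)} \le C\sqrt{\vep}$ with a constant $C$ independent of $\vep$. Next, I would apply \textit{Corollary 1} of \cite{siegel2020approximation} to the function $u_\vep/\zeta$ (which lies in the relevant Barron space on $\overline{\Omega}$ since $u_\vep \in C^{2+\alpha}(\overline{\Omega})$ from Lemma~\ref{lem1} and $\zeta$ is smooth with $|\nabla \zeta|\neq 0$ on $\p\Omega$): with $\sigma = \mathrm{ReLu}^2$ and $N$ neurons in \re{nn1}, there exists $U$ of the form \re{nn1} with
\begin{equation*}
    \|U - u_\vep/\zeta\|_{H^1(\Omega)} \le C N^{-1/2} \|u_\vep\|_{B^2}.
\end{equation*}
The smoothness of $\zeta$ and the product rule then upgrade this into
\begin{equation*}
    \|U\zeta - u_\vep\|_{H^1(\Omega)} \le C N^{-1/2} \|u_\vep\|_{B^2},
\end{equation*}
exactly as in the step leading to \re{m2n1}, but with the quantitative rate $N^{-1/2}$ from the Barron estimate.

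Finally, by the triangle inequality,
\begin{equation*}
    \|u - U\zeta\|_{H^1(\Omega)} \le \|u - u_\vep\|_{H^1(\Omega)} + \|u_\vep - U\zeta\|_{H^1(\Omega)} \le C\sqrt{\vep} + C N^{-1/2} \|u_\vep\|_{B^2}.
\end{equation*}
Choosing $\vep = O(1/N)$ (consistent with the relationship $\vep = O(\delta_4^2)$ from Theorem~\ref{thm3.2} with $\delta_4 \sim N^{-1/2}$) makes both terms of order $N^{-1/2}$, yielding the claimed bound.

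The main obstacle I expect is ensuring that the Barron norm $\|u_\vep\|_{B^2}$ stays bounded as $\vep \to 0$, since a blow-up in this constant would destroy the balance between the two error terms; one should check that, although $u_\vep \to u$ only in $W^{2,\infty}$-weak-$*$, its Barron norm on $\Omega$ can be controlled uniformly in $\vep$ via the $C^{2+\alpha}$ regularity of $u_\vep$ and a standard extension argument (as implicit in the statement, the bound is written in terms of $\|u_\vep\|_{B^2}$ rather than $\|u\|_{B^2}$ precisely to sidestep the limited regularity of $u$). A secondary technicality is that \textit{Corollary 1} of \cite{siegel2020approximation} is typically stated for functions on a cube in $\mathbb{R}^d$, so one needs to extend $u_\vep/\zeta$ to a compactly supported function on $\mathbb{R}^d$ without inflating the Barron norm, which is a routine application of Stein or Whitney extension combined with a smooth truncation.
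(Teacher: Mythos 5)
Your approach is essentially the same as the paper's: both combine the $H^1$ penalization estimate $\|u - u_\vep\|_{H^1(\Omega)}\le C\sqrt{\vep}$ from Lemma~\ref{lem2} with the Barron-norm rate of Corollary~1 of Siegel--Xu applied to the neural-network ansatz, and then balance $\vep = O(1/N)$ (i.e.\ $\delta_4\sim N^{-1/2}$ in Theorem~\ref{thm3.2}) to get $N^{-1/2}$ overall. The caveats you flag --- uniform control of $\|u_\vep\|_{B^2}$ as $\vep\to 0$, the fact that Siegel--Xu must be applied to $u_\vep/\zeta$ rather than $u_\vep$ (so the bound should really carry $\|u_\vep/\zeta\|_{B^2}$), and the extension of $u_\vep/\zeta$ to a function on $\mathbb{R}^d$ --- are genuine gaps left unaddressed in the paper, which states the theorem with no proof beyond the remark that it follows from Theorem~\ref{thm3.2} and the cited corollary.
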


\subsection{Homotopy algorithm}
In Method 2, the systems would become singular as $\vep$ is close to 0. To tackle the singularity and make the approximation more precise, we apply a homotopy algorithm in the optimization problem. 
The basic idea is to train a simple model at the beginning, then to increase the complexity of the structure, and finally to train the original model. 
More specifically, we introduce a homotopy parameter $t\in[0,1]$ in the loss function \re{loss} and consider 
\begin{equation}
    \label{loss1}
    F(\theta,t) = \int_\Omega \Big(\frac12|\nabla (U\zeta)|^2 - U\zeta f + t B_{\vep}(\phi-U\zeta)\Big) \dif x.
\end{equation}
From the previous analysis, we know that minimizing $F(\theta,t)$ is equivalent to solving a Poisson's equation
\begin{eqnarray*}
    &-\Delta u - f = t B_{\vep}'(\phi-u) \hspace{2em} &\text{in }\Omega,\\
    &u=0 \hspace{2em} &\text{on }\p\Omega.
\end{eqnarray*}
When $t=0$, the right-hand side of the first equation is 0; in this case, the system is linear, hence the solution can be easily approximated by the optimization problem with random initial value for the neural networks. As we gradually increase $t$, the impact of the penalized term is also increasing. For each $t=i\delta t$ ($\delta t$ is the homotopy stepsize, and $i\ge 1$), we solve the optimization problem by using the result of $\text{argmin}_\theta F(\theta,(i-1)\delta t)$ as the initial guess of the parameters. When $t=1$, we recover the problem \re{min}. The detailed algorithm is shown in Algorithm \ref{alg}.

\begin{algorithm}[H]
\caption{The homotopy algorithm}
\label{alg}
\begin{algorithmic}
\STATE Solve the optimization problem $\min_\theta F(\theta,0)$ and denote the solution as $\theta^0$;
\STATE Set $N = \frac{1}{\delta t}$, where $\delta t$ is the homotopy stepsize;
\FOR{$i=1,\cdots,N$}
\STATE Solve $\theta^i = \text{argmin}_\theta F(\theta, i \delta t)$ by using $\theta^{i-1}$ as the initial guess;
\ENDFOR
\end{algorithmic}
\end{algorithm}

\section{Simulation results}
\subsection{Example 1: a 1-D problem}

For our first example, we start our investigation of the numerical result for the following simple one dimensional obstacle problem
\begin{eqnarray*}
    &&-u''\ge 0 \hspace{2em} -2<x<2,\\
    &&u\ge 1-x^2 \hspace{2em} -2<x<2,\\
    &&-u''\{u-(1-x^2)\}=0 \hspace{2em} -2<x<2,\\
    &&u(-2)=u(2)=0.
\end{eqnarray*}
The exact solution of the above system can be solved explicitly
\begin{equation*}
    u_{\text{exact}} = \left\{ 
    \begin{array}{ll}
    (4-2\sqrt{3})(x+2) \hspace{2em} &-2\le x\le -2+\sqrt{3},\\
    (1-x^2)\hspace{2em} &-2+\sqrt{3}\le x\le 2-\sqrt{3},\\
    (4-2\sqrt{3})(2-x) \hspace{2em} &2-\sqrt{3}\le x\le 2.
    \end{array}
    \right.
\end{equation*}

For both methods, we set the activation function $\sigma(x) = \max\{0,x\}^2$ and the maximum number of iterations as 4000. The initial values are randomly picked. With different choices of the number of neurons $N$ and $\vep$, we ran both Method 1 and Method 2 ten times, and calculated the average error with the exact solution. The results are shown in Tables \ref{T1} and \ref{T2}. 

In the first table, we found that the errors decrease as we increase the number of neurons $N$ from 10 to 40, and we can compute the convergence rate of $N$ by using the formula
\begin{equation}\label{rate1}
    \log_{\frac12}\Big(\frac{\text{error with $N=10$} - \text{error with $N=20$}}{\text{error with $N=20$}-\text{error with $N=40$}}\Big).
\end{equation}
It is computed that the error is about $O(N^{-0.61})$, which is very close to the rate we obtained in Theorem \ref{ratem1}. We want to emphasize that the rates in Theorems \ref{ratem1} and \ref{thm3.6} might not be the optimal rate of $N$. On the other side, since the optimization problem is highly non-linear and non-convex, particularly as $N$ gets bigger, then the solution may stuck at a local minimum (not a global minimum). Due to this reason, we see that the errors become even larger when $N=80$. In order to obtain the optimal rate numerically, efforts are required to tune hyperparameters properly; for instance, we can use a greedy algorithm instead of gradient descent, which dynamically builds the neural network starting from a simplified version and solves a sub-optimization problem; in this way, a global minimum is guaranteed at each iteration \cite{hao2021efficient}. In our future work, we will consider the implement of a greedy algorithm in this project.

As for Method 2, we first kept $N$ fixed and varied $\vep$. We see that as $\vep$ decreases, the errors become smaller. The convergence rate of $\vep$ can be computed by
\begin{equation}\label{rate2}
    \log_{10}\Big(\frac{\text{error with $\vep=0.1$} - \text{error with $\vep=0.01$}}{\text{error with $\vep=0.01$}-\text{error with $\vep=0.001$}}\Big).
\end{equation}
The result is about $O(\vep)$, and it justifies the rate in Theorem \ref{thmm2}. Moreover, when $\vep$ is fixed, the errors decrease as $N$ gets bigger. The convergence rate of $N$ can be similarly computed --- it is around $O(N^{-0.82})$. 
In addition, we also applied the Method 2 with homopoty (we let $\delta t =0.1$ in Algorithm \ref{alg}),  and we obtained a more accurate estimate for the solution. We see that the $L^\infty$ error in this case is roughly within the $2\vep$ range, which agrees well with the estimate \re{nne}.

\begin{table}[H]
\begin{tabular}{ | m{4em} | m{5cm}| m{5cm} |} 
\hline
   &  Method 1 ($L^\infty$ norm) & Method 1 ($L^2$ norm) \\
   \hline
   $N=10$ & $1.021\cdot 10^{-2}$ & $7.275\cdot 10^{-5}$\\
   \hline
   $N=20$ & $7.203\cdot 10^{-3}$ & $5.374\cdot 10^{-5}$\\
   \hline
   $N=40$ & $5.241\cdot 10^{-3}$ & $3.602\cdot 10^{-5}$\\
   \hline
   $N=80$ & $1.724\cdot 10^{-2}$ & $1.247\cdot 10^{-4}$\\
  \hline
\end{tabular}
\caption{Numerical errors of Method 1 for Example 4.1.}\label{T1}
\end{table}

\begin{table}[H]
\begin{tabular}{ | m{8em} | m{3.5cm}| m{3.5cm} | m{3.5cm} |} 
\hline
   &  Method 2 ($L^\infty$ norm) & Method 2 ($L^2$ norm) & Method 2 with homotopy ($L^\infty$ norm) \\
   \hline
   $\vep=0.1$, $N=20$ & $2.243\cdot 10^{-1}$ & $1.878\cdot 10^{-3}$ & $2.198\cdot 10^{-1}$ \\
   \hline
   $\vep=0.01$, $N=20$ & $3.380\cdot 10^{-2}$ & $2.327\cdot 10^{-4}$ & $2.943\cdot 10^{-2}$\\
   \hline
   $\vep=0.001$, $N=20$ & $1.594\cdot 10^{-2}$ & $1.191\cdot 10^{-4}$ & $2.964\cdot 10^{-3}$\\
   \hline
   $\vep = 0.001$, $N=10$ & $1.978\cdot 10^{-2}$ & $1.433\cdot 10^{-4}$ & $7.496\cdot 10^{-3}$\\
  \hline
  $\vep = 0.001$, $N=40$ & $1.376\cdot 10^{-2}$ & $1.034\cdot 10^{-4}$ & $3.133\cdot 10^{-3}$\\
  \hline
\end{tabular}
\caption{Numerical errors of Method 2 for Example 4.1.}\label{T2}
\end{table}

In Figures  \ref{fig-M1-1D} -- \ref{fig-M2-1Dwh}, we present one simulation result obtained by using Method 1, Method 2, and Method 2 with homotopy, respectively. 
In these three figures, the left plot shows the numerical solution we obtained, and the right plot exhibit the error compared with the exact solution.
We can see that the numerical solutions approximate the exact solution very well in the all three cases.

\begin{figure}[H]
\begin{subfigure}{.48\textwidth}
  \centering
  \includegraphics[height=2in]{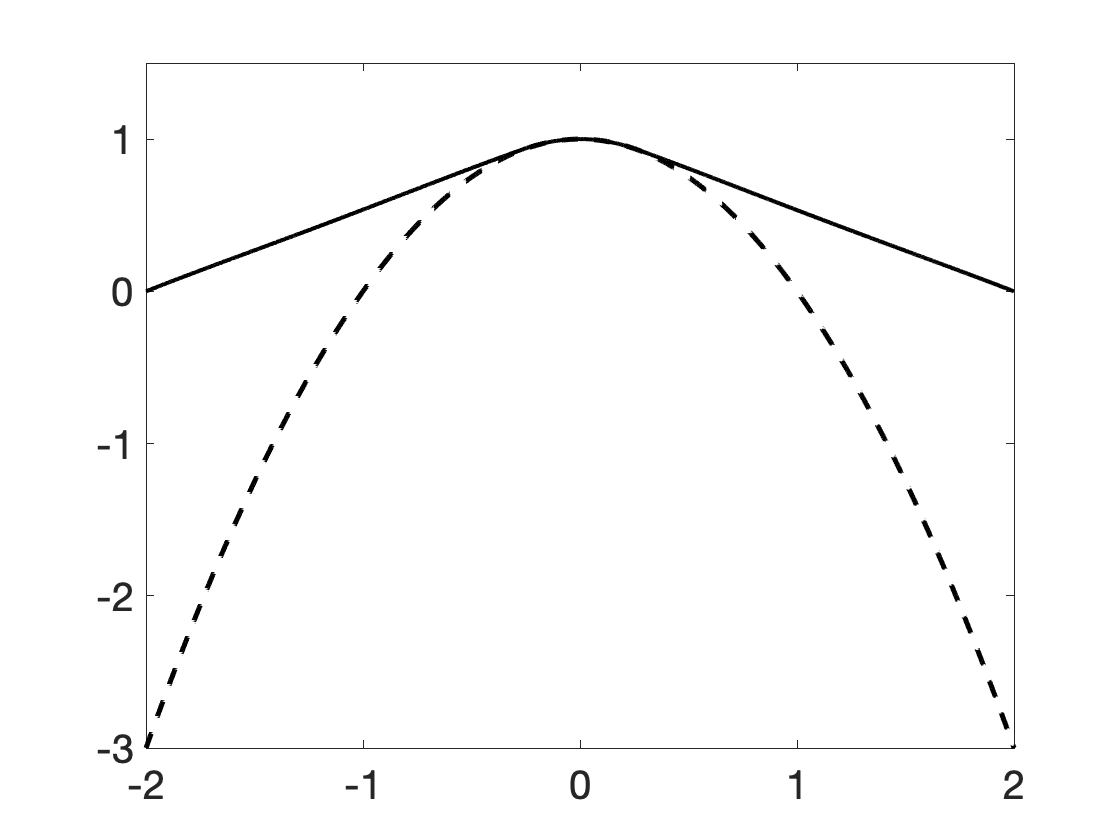}
\end{subfigure}
\begin{subfigure}{.48\textwidth}
  \centering
  \includegraphics[height=2in]{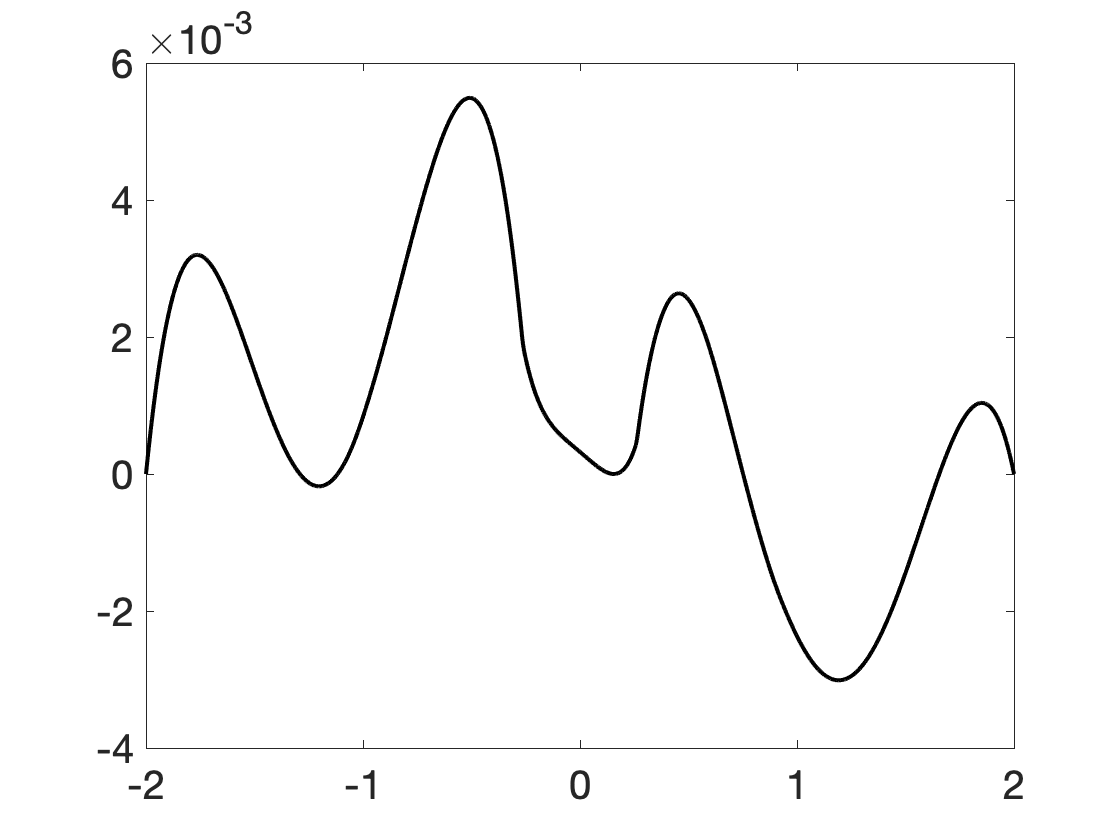}  
\end{subfigure}
\caption{Results obtained by using Method 1, $N=20$: (Left) the dashed curve is the obstacle and the black one is our numerical solution; (Right) the difference between our numerical solution and the exact solution.}
\label{fig-M1-1D}
\end{figure}

\begin{figure}[H]
\begin{subfigure}{.48\textwidth}
  \centering
  \includegraphics[height=2in]{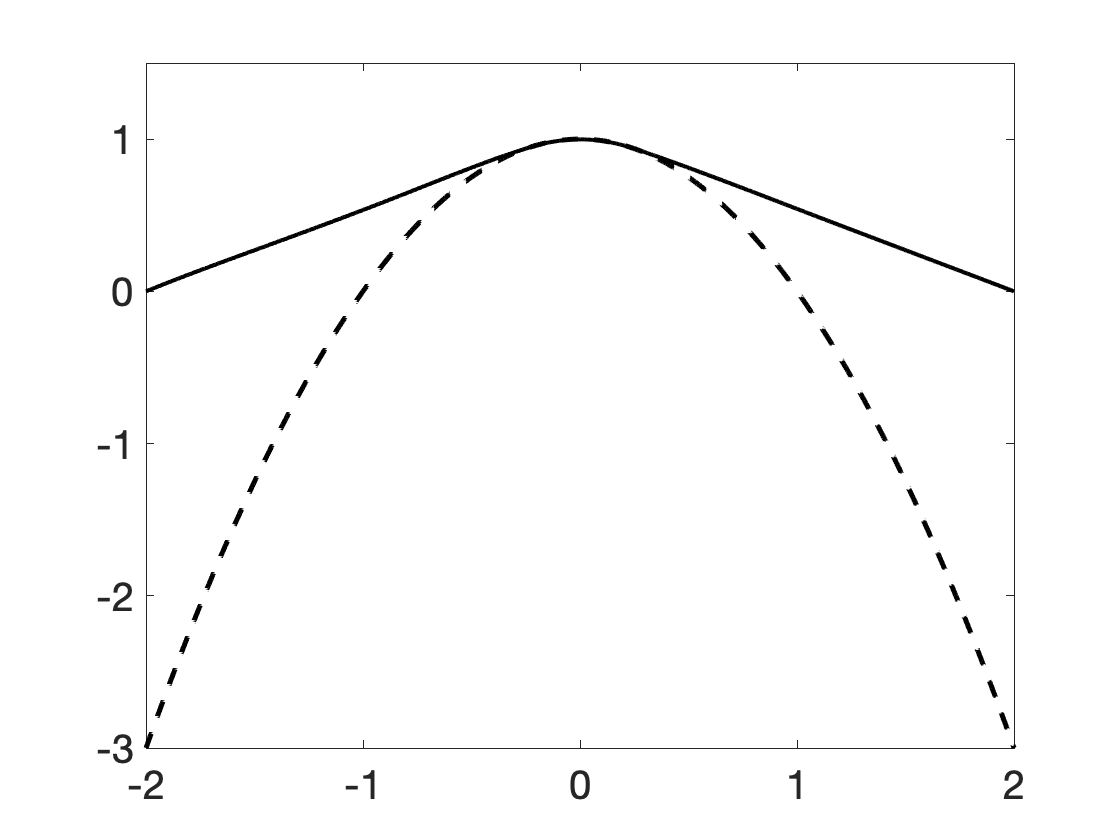}
\end{subfigure}
\begin{subfigure}{.48\textwidth}
  \centering
  \includegraphics[height=2in]{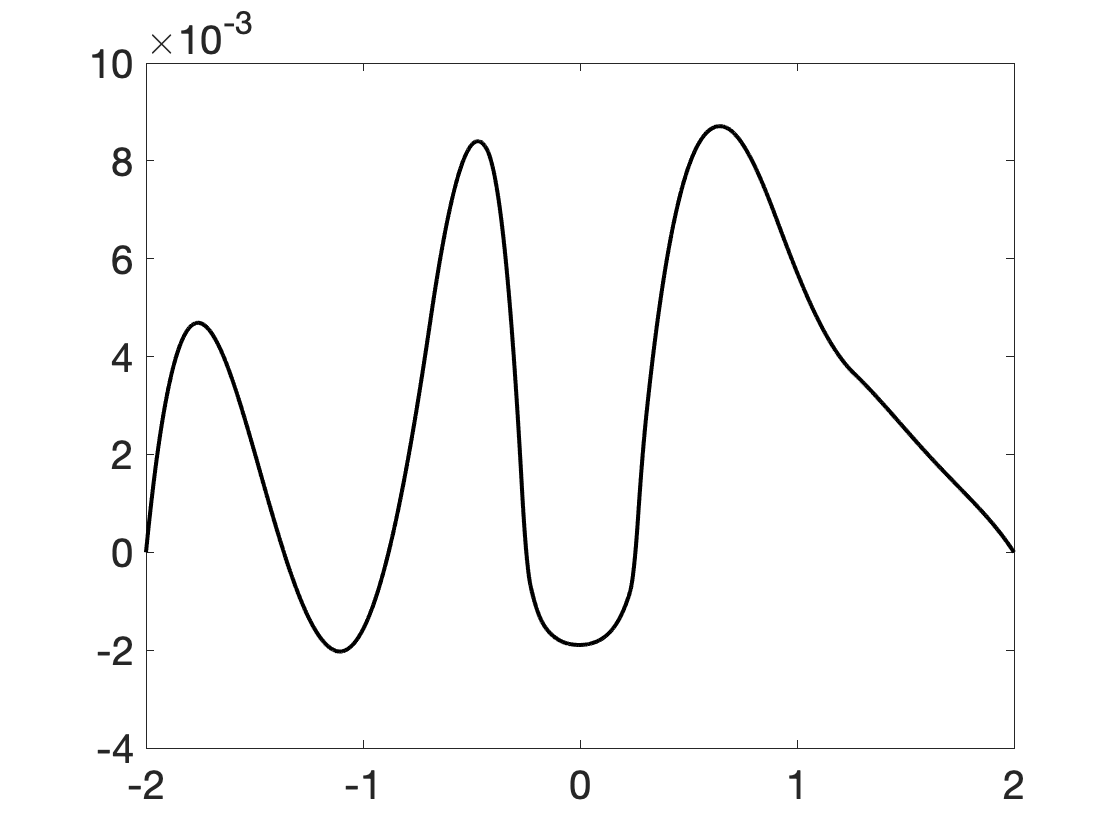}  
\end{subfigure}
\caption{Results obtained by using Method 2, $\vep=10^{-3}$, $N=20$: (Left) the dashed curve is the obstacle and the black one is our numerical solution; (Right) the difference between our numerical solution and the exact solution.}
\label{fig-M2-1Dwh}
\end{figure}

\begin{figure}[H]
\begin{subfigure}{.48\textwidth}
  \centering
  \includegraphics[height=2in]{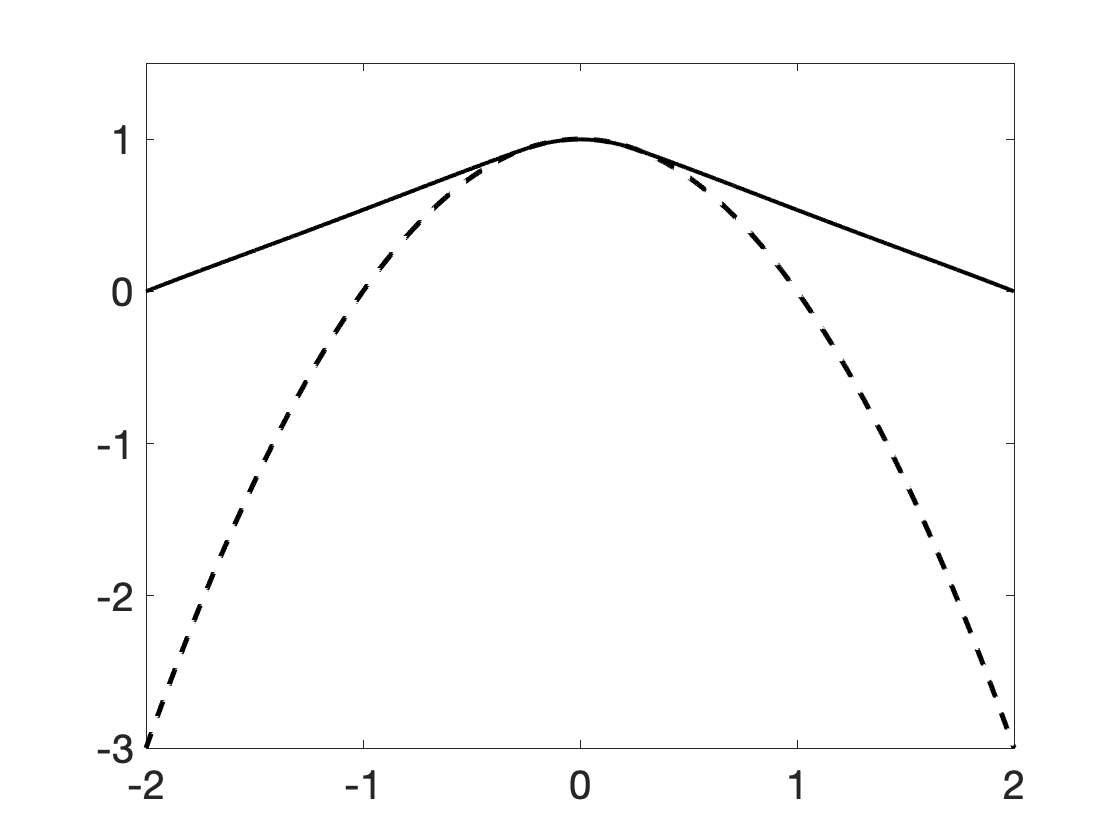}
\end{subfigure}
\begin{subfigure}{.48\textwidth}
  \centering
  \includegraphics[height=2in]{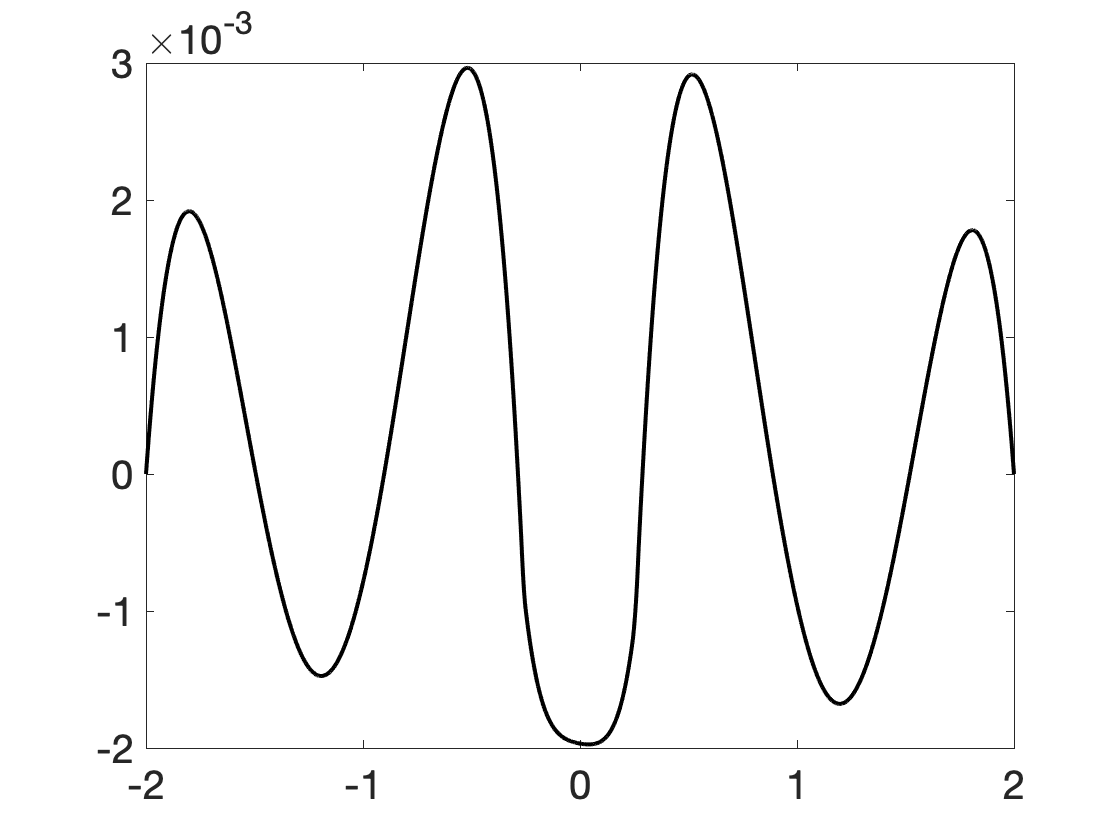}  
\end{subfigure}
\caption{Results obtained by using Method 2 with homotopy, $\vep=10^{-3}$, $N=20$: (Left) the dashed curve is the obstacle and the black one is our numerical solution; (Right) the difference between our numerical solution and the exact solution.}
\label{fig-M2-1D}
\end{figure}

\subsection{Example 2: a 2-D problem}
Next, we consider a two-dimensional (2D) problem on the domain $\Omega = [-2,2]\times [-2,2]$ with $f = -\frac{(r^*)^2}{2\sqrt{1-(r^*)^2}}$ and the obstacle
\begin{equation*}
    \phi(x,y)= \left\{
    \begin{array}{ll}
    \sqrt{1-x^2-y^2}-\frac{(r^*)^2}{\sqrt{1-(r^*)^2}}\Big[-\frac18(x^2+y^2)+\frac12\Big] \hspace{2em} &\text{for } x^2+y^2 \le 1,\\
    -1-\frac{(r^*)^2}{\sqrt{1-(r^*)^2}}\Big[-\frac18(x^2+y^2)+\frac12\Big] \hspace{2em} &\text{otherwise},
    \end{array}
    \right.
\end{equation*}
where $r^*$ is a constant which satisfies $(r^*)^2(1-\log(r^*/2))=1$. The analytical solution to the above system can be solved directly as
\begin{equation*}
    u_{\text{exact}} = \left\{
    \begin{array}{ll}
    \sqrt{1-x^2-y^2}-\frac{(r^*)^2}{\sqrt{1-(r^*)^2}}\Big[-\frac18(x^2+y^2)+\frac12\Big] \hspace{2em} &\text{for } x^2+y^2 \le (r^*)^2,\\
    -\frac{(r^*)^2}{\sqrt{1-(r^*)^2}}\log\frac{\sqrt{x^2+y^2}}{2}-\frac{(r^*)^2}{\sqrt{1-(r^*)^2}}\Big[-\frac18(x^2+y^2)+\frac12\Big] \hspace{2em} &\text{for } (r^*)^2\le x^2+y^2\le 2^2,\\
    0\hspace{2em} &\text{for } x^2+y^2\ge 2^2.
    \end{array}
    \right.
\end{equation*}

Similar as in the Example 1, we used the ReLu2 function, $\max\{0,x\}^2$, as our activation function, took random values for the initial parameters, and set the maximum number of iterations as 4000. The average errors based on 10 simulation results are shown in Tables \ref{T3} and \ref{T4}, and the plots are exhibited in Figures \ref{fig-M1-2D} --- \ref{fig-M2-2Dwh}.

Using the data from Table \ref{T3}, we can compute the convergence rate of Method 1 based on the formula \re{rate1}; it is approximately $O(N^{-0.50})$, which is consistent with the theoretical derived order in Theorem \ref{ratem1}. For the Method 2, we see from the first three rows of Table \ref{T4} that the errors first decrease as we decrease $\vep$, but increase slightly when we further decrease $\vep$ to 0.001; it indicates that the dominant error in this case is the error induced by neural networks. In fact, from the second row and the last two rows of Table \ref{T4}, we found that if $\vep$ is fixed, the error is smaller with larger $N$, and the method achieves a convergence order of $O(N^{-1.89})$ with our results. We want to emphasize that, in the convergence analysis of Method 2, we have chosen $\vep = O(\delta_4^2)=O((N^{-0.5})^2)=O(1/N)$ to obtain the convergence rate Theorem \ref{thm3.6}. Therefore, in the simulations, we need to carefully tuned the values of $\vep$ and $N$ such that the rate in Theorem \ref{thm3.6} can be achieved.

\begin{table}[H]
\begin{tabular}{ | m{4em} | m{5cm}| m{5cm} |} 
\hline
   &  Method 1 ($L^\infty$ norm) & Method 1 ($L^2$ norm) \\
   \hline
   $N=10$ & $8.864\cdot 10^{-2}$ & $2.007\cdot 10^{-4}$\\
   \hline
   $N=20$ & $7.008\cdot 10^{-2}$ & $1.530\cdot 10^{-4}$\\
   \hline
   $N=40$ & $5.700\cdot 10^{-2}$ & $1.250\cdot 10^{-4}$\\
  \hline
\end{tabular}
\caption{Numerical errors of Method 1 for Example 4.2.}\label{T3}
\end{table}

\begin{table}[H]
\begin{tabular}{ | m{8em} | m{3.5cm}| m{3.5cm} | m{3.5cm} |} 
\hline
   &  Method 2 ($L^\infty$ norm) & Method 2 ($L^2$ norm) & Method 2 with homotopy ($L^\infty$ norm) \\
   \hline
   $\vep=0.1$, $N=20$ & $2.868\cdot 10^{-1}$ & $7.164\cdot 10^{-4}$ & $2.860\cdot 10^{-1}$ \\
   \hline
   $\vep=0.01$, $N=20$ & $5.190\cdot 10^{-2}$ & $1.249\cdot 10^{-4}$ & $4.385\cdot 10^{-2}$\\
   \hline
   $\vep=0.001$, $N=20$ & $6.085\cdot 10^{-2}$ & $1.423\cdot 10^{-4}$ & $2.448\cdot 10^{-2}$\\
   \hline
   $\vep = 0.01$, $N=10$ & $5.972\cdot 10^{-2}$ & $1.392\cdot 10^{-4}$ & $4.509\cdot 10^{-2}$\\
  \hline
  $\vep = 0.01$, $N=40$ & $4.592\cdot 10^{-2}$ & $1.096\cdot 10^{-4}$ & $4.331\cdot 10^{-2}$\\
  \hline
\end{tabular}
\caption{Numerical errors of Method 2 for Example 4.2.}\label{T4}
\end{table}

\begin{figure}[H]
\begin{subfigure}{.48\textwidth}
  \centering
  \includegraphics[height=2in]{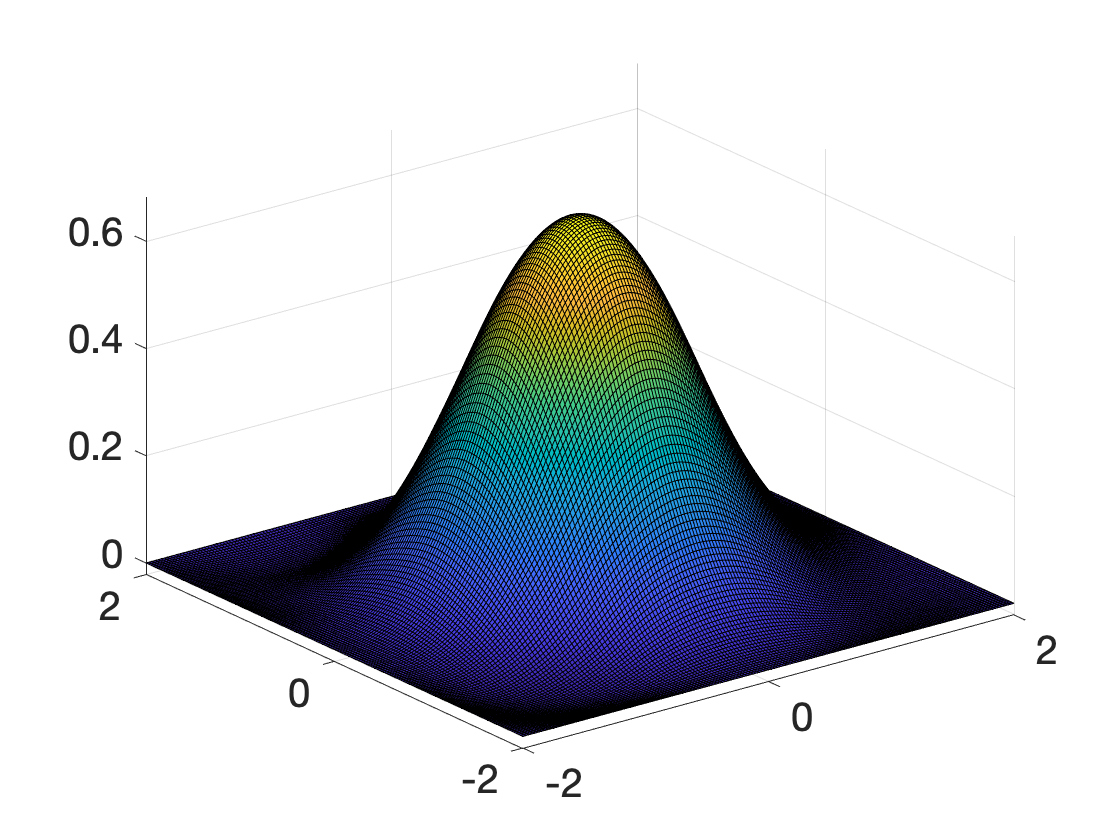}
\end{subfigure}
\begin{subfigure}{.48\textwidth}
  \centering
  \includegraphics[height=2in]{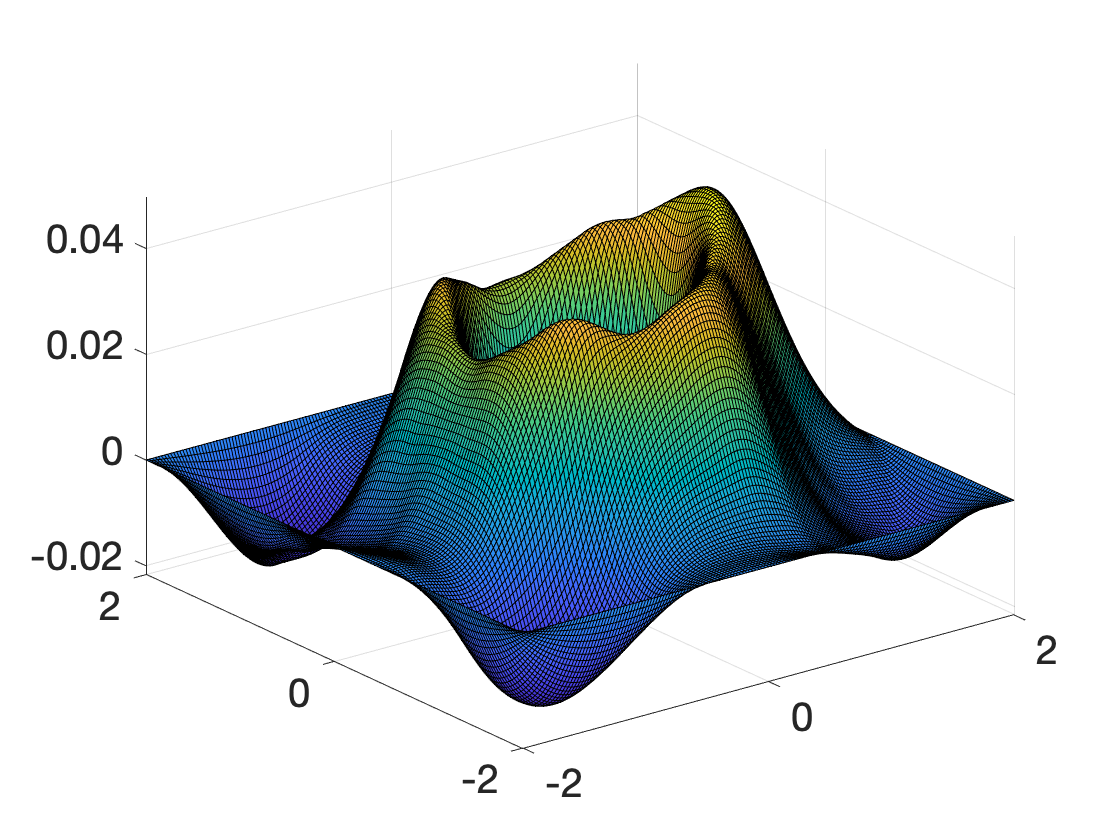}  
\end{subfigure}
\caption{Results obtained by using Method 1, $N=40$: (Left) the numerical solution; \\(Right) the difference with the analytical solution.}
\label{fig-M1-2D}
\end{figure}

\begin{figure}[H]
\begin{subfigure}{.48\textwidth}
  \centering
  \includegraphics[height=2in]{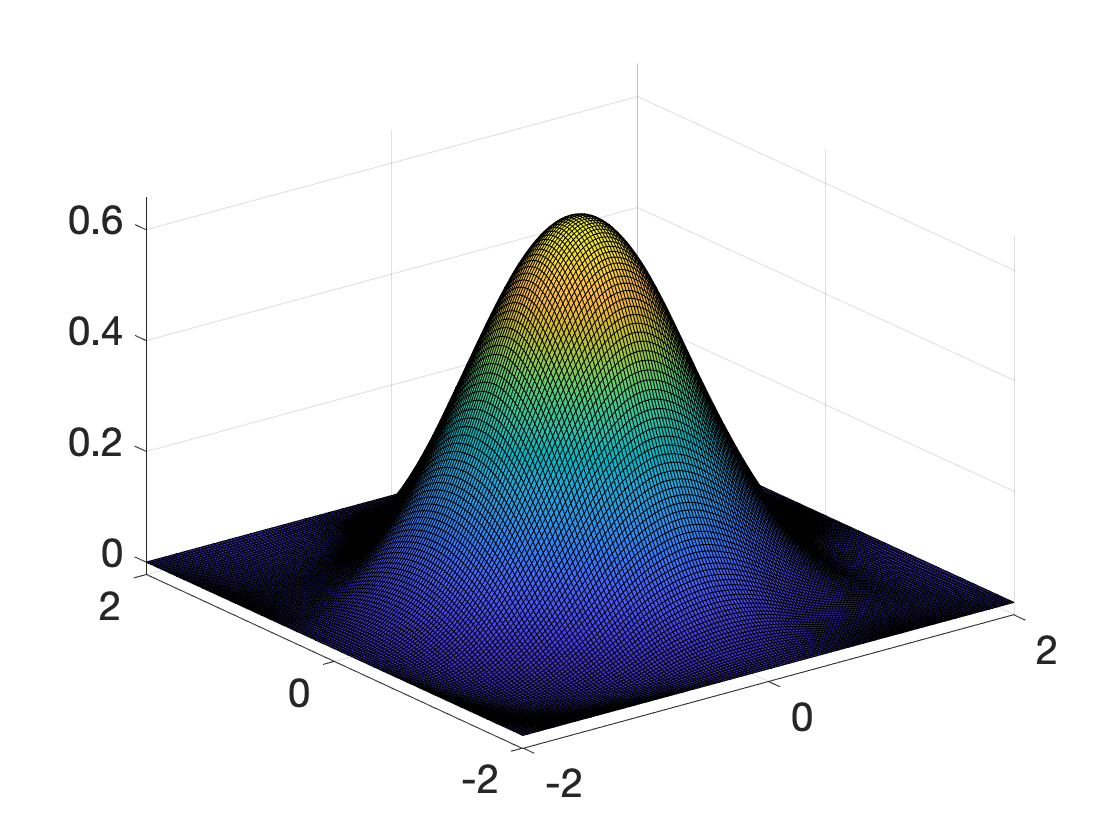}
\end{subfigure}
\begin{subfigure}{.48\textwidth}
  \centering
  \includegraphics[height=2in]{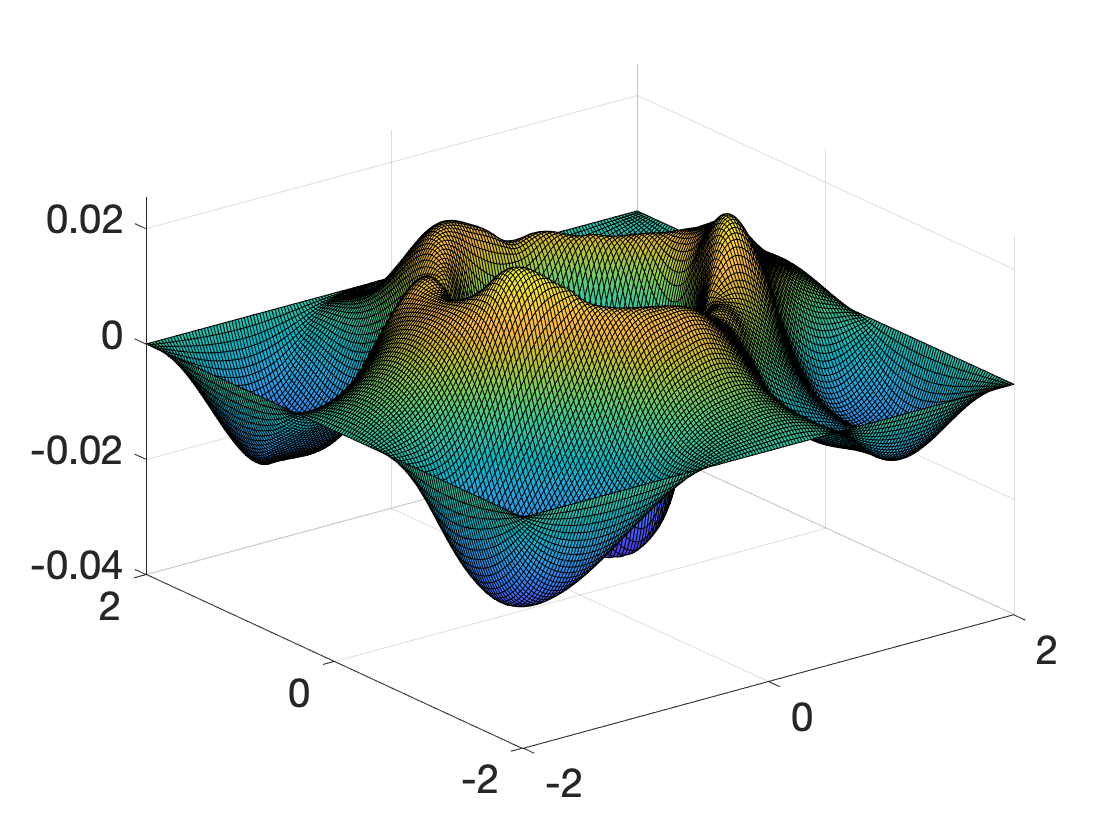}  
\end{subfigure}
\caption{Results obtained by using Method 2, $\vep=0.01$, $N=40$: (Left) the numerical solution; (Right) the difference with the analytical solution.}
\label{fig-M2-2D}
\end{figure}

\begin{figure}[h]
\begin{subfigure}{.48\textwidth}
  \centering
  \includegraphics[height=2in]{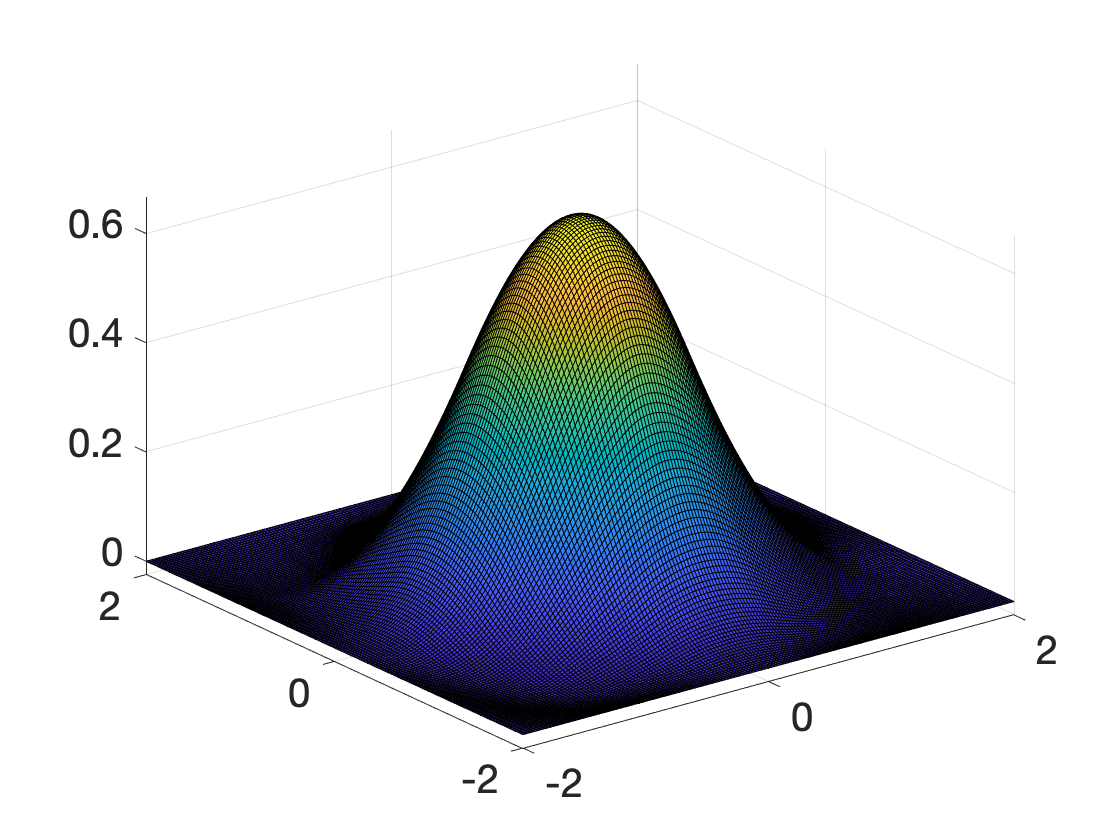}
\end{subfigure}
\begin{subfigure}{.48\textwidth}
  \centering
  \includegraphics[height=2in]{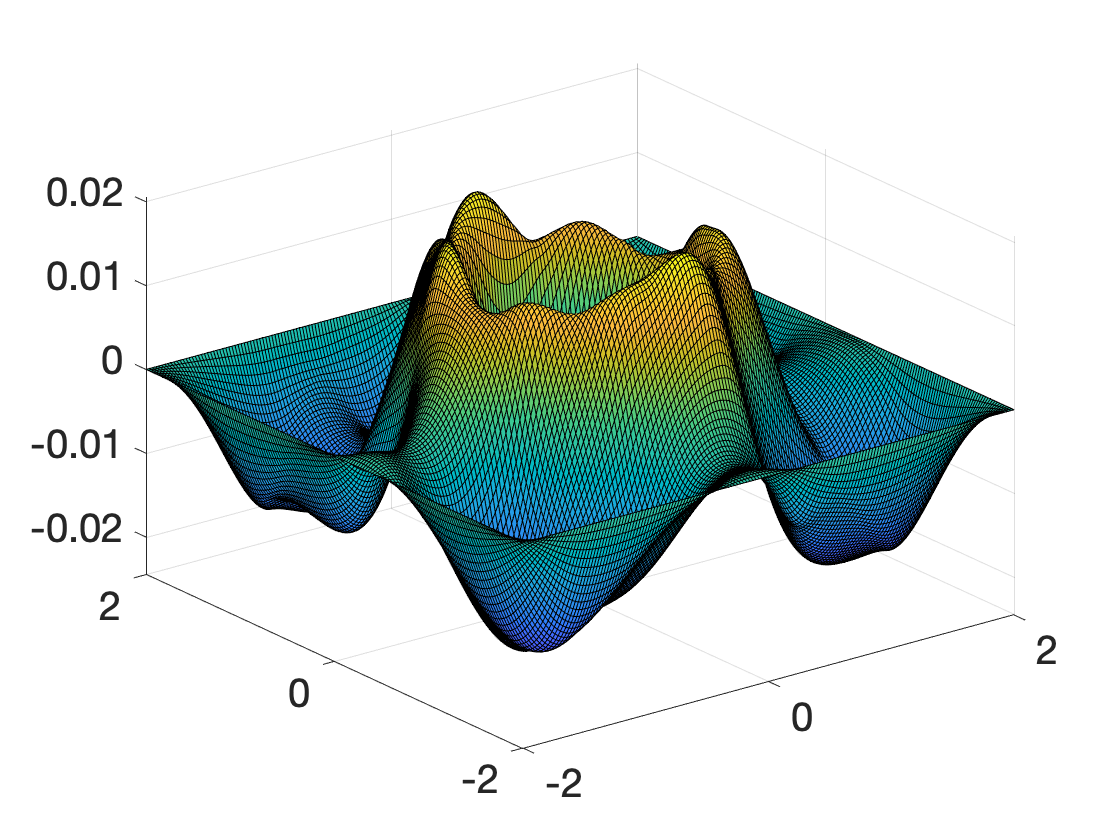}  
\end{subfigure}
\caption{Results obtained by using Method 2 with homotopy, $\vep=0.001$, $N=20$: (Left) the numerical solution; (Right) the difference with the analytical solution.}
\label{fig-M2-2Dwh}
\end{figure}
\section{Conclusion}
Two neural-network-based schemes were proposed for solving free boundary problems coming from contact with obstacles. The cost function of the first method comes naturally from the energy minimization of the problem, and the cost function of the second method is based on the variational form of a penalized system. These two new methods were verified on a 1-D and a 2-D obstacle problem, and the convergence rates were computed based on the simulation results. While the numerical solutions obtained by using these two methods approximate the actual solutions very well, the theoretical convergence rates were not achieved. This is because the training algorithm we used does not guarantee the global minimum. In our future work, we will try other training algorithms, e.g., greedy training algorithm \cite{hao2021efficient}, on our methods to see whether the theoretical rates can be recovered in our problem.

\bigskip

\end{document}